\documentclass[11pt,twoside,reqno]{amsart}
\usepackage[all]{xy}
\usepackage{amssymb,latexsym,amsthm,amsmath,mathtools}
\usepackage{enumitem,color}

\usepackage[colorlinks=true, linkcolor=black, citecolor=blue]{hyperref}
\usepackage[capitalize,nameinlink]{cleveref}
\usepackage{cleveref}
\usepackage[
    backend=biber,
    style=numeric-comp,
    doi=true,
    url=false,
    maxbibnames=99,
    giveninits=true
]{biblatex}
\crefname{section}{Section}{Sections}
\crefname{subsection}{Subsection}{Subsections}
\crefname{subsubsection}{Subsubsection}{Subsubsections}
\usepackage{parskip}
\usepackage{mathtools}

\usepackage[backend=biber, doi=true, url=false, maxbibnames=99, giveninits=true]{biblatex}
\usepackage{orcidlink}
\long\def\symbolfootnote[#1]#2{\begingroup%
\usepackage{hyperref}
\usepackage{tikz}
\usepackage{blkarray}
\usepackage{arydshln}
\usepackage{tabularx}
\setlength\dashlinedash{0.5pt}
\setlength\dashlinegap{1.5pt}
\setlength\arrayrulewidth{0.3pt}
\def\thefootnote{\fnsymbol{footnote}}\footnote[#1]{#2}\endgroup}

\makeatletter
\def\imod#1{\allowbreak\mkern10mu({\operator@font mod}\,\,#1)}
\makeatother

\newtheorem{theorem}{Theorem}[section]
\newtheorem{lemma}[theorem]{Lemma}
\newtheorem{corollary}[theorem]{Corollary}
\newtheorem{proposition}[theorem]{Proposition}
\newtheorem*{theorem*}{Theorem}
\theoremstyle{definition}

\newtheorem{remark}[theorem]{Remark}

\newtheorem{question}[theorem]{Question}

\numberwithin{equation}{section}

\newcommand{\ignore}[1]{}

\newcommand{\mynote}[1]{}
\begin{document} 
\author[Saini P.]{Prachi Saini\,\orcidlink{0009-0006-7627-1538}}
\address{IISER Pune, Dr. Homi Bhabha Road, Pashan, Pune 411 008, India}
\email{prachi2608saini@gmail.com}
\author[Singh A.]{Anupam Singh}
\address{IISER Pune, Dr. Homi Bhabha Road, Pashan, Pune 411 008, India}
\email{anupamk18@gmail.com}
\thanks{The first-named author acknowledges the support of CSIR PhD scholarship number 09/0936(1237)/2021-EMR-I. The second-named author is funded by an ANRF-MATRICS Grant ANRF/ARGM/2025/000095/MTR}

\subjclass[2020]{Primary: 16S50,11P05, Secondary: 16K20}
\keywords{Polynomial maps, Polynomial maps with constant, Matrix algebras}
\setcounter{section}{0}
\title{Polynomial Maps with Constants on Matrix Algebra}

\begin{abstract}
Let $\mathcal A$ be an $\mathbb F$-algebra and $\omega \in \mathcal A\langle x_1, \ldots, x_m \rangle$ which defines a map $\mathcal A^m \rightarrow \mathcal A$ by evaluation, called a polynomial map with constant. We consider $\mathcal {A} = M_n(\mathbb{F})$, the algebra of $n \times n$ matrices over an algebraically closed field $\mathbb{F}$ of characteristic $0$, and polynomial maps given by $\omega(x_1, x_2) = A_1x_1^k + A_2x_2^k$, where $A_1,A_2\in M_n(\mathbb F)$. For $n=2$, the images of such a map is competely determined in an earlier work (Panja, S.; Saini, P.; Singh, A., Images of polynomial maps with constants, Mathematika 71 (2025), no. 3, Paper No. e70031). In this article, by assuming one of the coefficients, say $A_1$, is invertible, we relate the surjectivity of $\omega$ to the nullity of $A_2$. When $n=3, 4$, we completely classify the surjectivity of $\omega(x_1, x_2)$ by obtaining the necessary and sufficient condition in terms of $n$, $k$, and the nullity of $A_2$.
\end{abstract}
\maketitle

\section{Introduction}
The L\'{v}ov-Kaplansky conjecture asks for the image of multilinear polynomials on a matrix algebra over an infinite field. This is known so far for degree $2$ algebras, see \cite{MR2846315, MR3195161} and for several special cases of $\mathbb F$ for degree 3 matrix algebra \cite{MR3415572}, and is far from being solved. The more general problems are to find out images of a polynomial map on a central simple algebra, including matrix Waring problems \cite{MR1286824, MR797536, MR4255750}. We propose to consider a yet broader class of problems: studying polynomial maps with constants on algebras. The study of images of noncommutative polynomials on matrix algebras has attracted considerable attention; see, e.g., \cite{MR2846315, 1936361}. In this context, Waring-type questions ask whether every element of an algebra can be expressed as a sum of a bounded number of values of a fixed polynomial map. Substantial progress has been made in recent years in understanding such problems across various algebraic settings, including upper triangular matrix algebras and Lie algebras; see, for instance, \cite{MR4623691, MR4453886, MR4607594, MR4526834, MR3670351,
MR4732194, MR4647477, MR4592319,MR4776363}.

Let $\mathbb F$ be a field and $\mathcal A$ be an $\mathbb F$-algebra. Consider $\mathcal F_m = \mathbb F\langle x_1, x_2, \dots, x_m \rangle$, the free algebra of rank $m$ and the free product $\mathcal A_m = \mathcal A * \mathcal F_m$. An element $\omega(x_1,\ldots,x_m)\in \mathcal A_m$ is a finite $\mathbb F$-linear combination of monomials of the form $a_0 x_{i_1}^{k_1} a_1 \cdots x_{i_r}^{k_r} a_r$,
where $a_j \in \mathcal A$ and $k_s \ge 1$. Such an element defines an evaluation map
\begin{align*}
\omega: \mathcal A^m &\longrightarrow \mathcal A,\\
(a_1,\dots,a_m) &\longmapsto \omega(a_1,\dots,a_m).
\end{align*}
These maps are called \emph{polynomial maps with constants}. Polynomial maps with constants arise naturally as analogs of word maps with constants in group theory. Word maps on groups have been intensively studied in recent years, particularly with regard to their images, distribution, and asymptotic behavior; see, for instance, \cite{MR4223709, MR4030419, MR3700228, MR3765462, MR3087161,   MR4694588}. A central direction in this area concerns Waring-type problems. This asks whether every element of a group can be expressed as a bounded product of images of a fixed word map, and several fundamental results have been proved. A fundamental result of Larsen, Shalev, and Tiep \cite{MR2846493} shows that for finite non-abelian simple groups of sufficiently large order, every element can be written as a product of two values of a fixed power word. This demonstrates that surjectivity can often be achieved using only two variables. 

Analogous phenomena arise in associative algebras. For instance, Kishore and Singh \cite{MR4930689} proved that over sufficiently large finite fields, every matrix is a sum of two $k$-th powers. Moreover, recent work \cite{MR4952110} shows that over an algebraically closed field and sufficiently large finite fields, diagonal polynomial maps with scalar coefficients in two variables are surjective on $M_n(\mathbb F)$. The same holds over $\mathbb R$, with some exceptions, in which negative scalars are missed. 

Motivated by these developments, we study polynomial maps with constants that need not be scalars. Let $k$ be a positive integer and $A_1, A_2\in \mathcal A$ be non-zero elements. Consider
$$
\omega(x_1, x_2) = A_1 x_1^{k} + A_2 x_2^{k} \in \mathcal A * \mathcal F_2.
$$
The main question is to understand the image of this map on $\mathcal A$.
\begin{question}
For which pairs $(A_1, A_2)\in \mathcal A^2$ is the map $\omega$ surjective on $\mathcal A$?
\end{question}

This problem has been studied in various settings. For the matrix algebra $M_2(\mathbb F)$, a necessary and sufficient condition for surjectivity was obtained in an earlier work \cite{MR4922424}, formulated in terms of simultaneous conjugation of the coefficients $A_1$ and $A_2$. The split octonion algebra $\mathcal O$ was considered in \cite{MR4996673}, where all those pairs that do not give surjectivity were explicitly classified. A common feature in these works is that surjectivity holds whenever at least one of the coefficients is invertible.

In this article, we investigate this problem for matrix algebra in the case that at least one of the coefficients is invertible. We relate the surjectivity of $\omega$ to the nullity of $A_2$ which will turn out to be the nilpotent blocks in the Jordan canonical form of the matrix $A_2$. Our approach shows that the nilpotent structure of this matrix plays a decisive role in determining the image of $\omega$. 

Our main results are the following.
\begin{theorem}
\label{thm:General criteria for surjectivity}
Let $\mathbb F$ be an algebraically closed field of characteristic $0$, and $k \geq 2$ an integer. For $A_1 \in \mathrm{GL}_n(\mathbb F)$ and $A_2 \in M_n(\mathbb F)$, consider the polynomial map $\omega : M_n(\mathbb F) \times M_n(\mathbb F) \rightarrow M_n(\mathbb F)$ given by
$$\omega(x_1, x_2) = A_1 x_1^k + A_2 x_2^k.$$ 
Let $r_0$ be the nullity of $A_2$. Then we have the following: 
\begin{enumerate}
\item If $r_0 \leq 1$, then $\omega$ is surjective on $M_n(\mathbb F)$.
\item If $n \leq k(r_0 - 1)$, then $\omega$ is not surjective on $M_n(\mathbb F)$.
\end{enumerate}
\end{theorem}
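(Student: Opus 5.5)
We first normalize the problem. Since $A_1$ is invertible, $\omega$ is surjective if and only if $\omega'(x_1,x_2)=x_1^k+Bx_2^k$ is surjective, where $B=A_1^{-1}A_2$, and $B$ has the same nullity $r_0$ as $A_2$. For $P\in \mathrm{GL}_n(\mathbb F)$ we have $P(x_1^k+Bx_2^k)P^{-1}=(Px_1P^{-1})^k+(PBP^{-1})(Px_2P^{-1})^k$. Hence $B$ may be replaced by any conjugate, for instance its Jordan form.

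We will use one standard fact throughout: over an algebraically closed field of characteristic $0$, every invertible matrix is a $k$-th power, and so is every diagonalizable matrix. So it suffices to write a given $C$ as $X+BD$, where $X$ is a $k$-th power (invertible or diagonalizable) and $D$ is invertible.

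\emph{Part (1), case $r_0=0$.} Take $x_2=tI$. Then $\det(C-t^kB)$ is a polynomial in $t$ with leading coefficient $\det(-B)\neq 0$, so it is not identically zero. Choosing $t$ outside its finitely many roots makes $C-t^kB$ invertible, hence a $k$-th power $x_1^k$.

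\emph{Part (1), case $r_0=1$.} Let $H=\operatorname{Im}B$, a hyperplane, and let $v$ span $\ker B$. As $D$ ranges over all matrices, $BD$ ranges over all $M$ whose columns lie in $H$.

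If some column $c_i$ of $C$ lies outside $H$, we choose $M$ column by column so that $C-M$ has independent columns. This is possible because $c_i+H$ avoids $H$ for that index, and at most $n-1$ chosen columns need to lie in $H$. Then $x_1^k=C-M$ is invertible. We still have to realize $M=BD$ with $D$ invertible; we arrange this by choosing $M$ of rank $n-1$. In that case a preimage $D_0$ of $M$ has columns spanning at least $n-1$ dimensions, and adding suitable multiples of $v$ to its columns makes $D$ invertible.

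If instead $\operatorname{Im}C\subseteq H$, we take $x_1^k=N:=tN_0$, where $N_0$ is a fixed diagonalizable matrix with image $H$ and one-dimensional kernel. For generic $t$, the matrix $C-tN_0$ has image in $H$ and rank exactly $n-1$. We then solve $BD=C-tN_0$ with $D$ invertible as before, and take $x_2$ to be a $k$-th root of $D$. The delicate point in Part (1) is precisely this bookkeeping: keeping $D$ invertible, or at least a $k$-th power. The rank-$(n-1)$ genericity is what makes this work.

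\emph{Part (2).} Let $W$ be the $r_0$-dimensional space of row vectors $w$ with $wB=0$. Any value $C=x_1^k+Bx_2^k$ then satisfies $wC=wx_1^k$ for all $w\in W$. So it suffices to find $C$ such that no $k$-th power $X$ agrees with $C$ on $W$, i.e.\ $wX=wC$ for all $w\in W$.

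By conjugating, we may put $W$ in a convenient position relative to a nilpotent $C$. We take $C$ to be a suitable nilpotent matrix, for example a single Jordan block $J_n$ with $W$ placed along its chain, so that the constraints $wX=wC$ force $X$ to contain long nilpotent Jordan chains. We then use the structure theorem for nilpotent $k$-th powers: if a nilpotent block of size $m$ is raised to the $k$-th power, it splits into $\min(k,m)$ blocks whose sizes differ by at most one. Combining this with the constraints should show that $X$ would need at least $r_0$ independent chain conditions inside a space of dimension $n\le k(r_0-1)$, which is impossible.

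Choosing $C$ and the position of $W$ so that this counting works is the main obstacle. We would first try to take $C$ so that the rows of $W$ map under $C$ to a single cyclic chain.
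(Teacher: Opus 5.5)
Your Part (1) is correct, and it argues differently from the paper. The paper passes to the Jordan form of $B$ and builds $T=x_2^k$ row by row (Lemmas~\ref{lem:T-kth} and~\ref{lem:C-LI}, Propositions~\ref{thm:full size nilpotent} and~\ref{prop:one-nil}). It treats the single full nilpotent block and the mixed case separately. You instead work coordinate-free with $H=\operatorname{Im}B$, using the fact that $BD=M$ has an invertible solution $D$ whenever $\operatorname{Im}M\subseteq H$ and $\operatorname{rank}M=n-1$. Your case split, $\operatorname{Im}C\not\subseteq H$ versus $\operatorname{Im}C\subseteq H$, is the dual of the paper's split $c_n\neq 0$ versus $c_n=0$. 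Your version covers every $B$ with $r_0=1$ at once and is shorter. One point should be made explicit: you need invertibility of $C-M$ and $\operatorname{rank}M=n-1$ at the same time. This holds because both are nonempty Zariski-open conditions on the affine space $\{M:\operatorname{Im}M\subseteq H\}$, so some $M$ satisfies both.

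Part (2), however, is not proved. You set up the right reduction, namely $wC=wx_1^k$ for every $w$ in the left kernel $W$ of $B$. But you leave the choice of $C$ and the final counting open. The count you sketch, ``$r_0$ independent chain conditions in a space of dimension $n$'', is not the quantity that produces the contradiction.

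The missing idea is to choose $C$ so that $W$ is stable under right multiplication by $C$, and $C$ acts on $W$ as a single nilpotent Jordan block of size $r_0$. Concretely, take a basis $w_1,\dots,w_{r_0}$ of $W$, extend it to a chain, and set $w_iC=w_{i+1}$ and $w_{r_0}C=0$. This makes $C\sim J_{0,n}$ with $W$ the \emph{tail} of the chain; placing $W$ at the head of the chain forces nothing nilpotent. It is exactly the paper's choice $C^P_{21}=0$, $C^P_{22}=J_{0,r_0}$.

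With this $C$, any $X$ satisfying $wX=wC$ on $W$ has $WX\subseteq W$, and $X$ restricted to $W$ is a single nilpotent block of size $r_0$. Hence $z^{r_0}$ divides the minimal polynomial of $X$, so $X$ has a nilpotent Jordan block of size at least $r_0$. Now suppose $X=Y^k$. Nilpotent blocks of $X$ come only from nilpotent blocks of $Y$. By Lemma~\ref{lem:Miller}, a block of size $p\le n$ yields blocks of size at most $\lceil p/k\rceil\le\lceil n/k\rceil$. The hypothesis $n\le k(r_0-1)$ gives $\lceil n/k\rceil\le r_0-1$, a contradiction. So this $C$ is not in the image. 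Without this step your Part (2) is a plan rather than a proof.
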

The proof of \cref{thm:General criteria for surjectivity} is given in \ref{proof: thm 1.2}. We use this for the case $n=3$ and $4$ and get the complete characterization of surjectivity.

\begin{theorem}
\label{thm: Low dimensional classification}
Let $\mathbb F$ be an algebraically closed field of characteristic $0$, and $k \geq 2$ an integer. Consider the polynomial map $\omega : M_n(\mathbb F) \times M_n(\mathbb F) \rightarrow M_n(\mathbb F)$ given by
$$\omega(x_1, x_2) = A_1 x_1^k + A_2 x_2^k$$
for $A_1 \in \mathrm{GL}_n(\mathbb F)$ and $A_2 \in M_n(\mathbb F)$,
Then, for $n=3$ and $4$, the map $\omega$ is surjective on $M_n(\mathbb F)$ if and only if $n > k(r_0 - 1)$, where $r_0$ is the nullity of $A_2$.
\end{theorem}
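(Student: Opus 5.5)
By \cref{thm:General criteria for surjectivity} most cases are already settled, so the plan is to first reduce to a short list of exceptional triples $(n,r_0,k)$ and then handle those by explicit construction.

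\textbf{Reduction to three cases.} If $r_0\le 1$, part (1) gives surjectivity, and indeed $n>0=k(r_0-1)$. If $r_0\ge 2$, we compare with part (2).
\begin{itemize}
\item For $n=3$: $r_0=3$ gives $3\le 2k$, so $\omega$ is not surjective. For $r_0=2$ we have $3\le k$ exactly when $k\ge 3$, and then $\omega$ is not surjective.
\item For $n=4$: $r_0\in\{3,4\}$ gives $4\le 2k$, so $\omega$ is not surjective. For $r_0=2$ we have $4\le k$ exactly when $k\ge 4$.
\end{itemize}
Thus the "only if" direction follows entirely from part (2). What remains is to prove surjectivity in the three cases $(n,r_0,k)\in\{(3,2,2),(4,2,2),(4,2,3)\}$. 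In each of these, $n\ge k+1$ and $r_0=2$.

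\textbf{Normalisation.} Since $A_1$ is invertible, $\omega$ is surjective iff $x_1^k+Cx_2^k$ is surjective, where $C=A_1^{-1}A_2$. The nullity of $C$ is still $r_0=2$. Conjugating simultaneously, we may assume $C$ is in Jordan form, so $W:=\operatorname{col}(C)$ has dimension $n-2$. Fix a target $B$. We need a $k$-th power $Y=x_2^k$ such that $B-CY$ is a $k$-th power. We use the standard description of $k$-th powers over an algebraically closed field of characteristic $0$: every invertible matrix is a $k$-th power, and a nilpotent part is a $k$-th power iff its Jordan partition is a union of partitions of the form "$J_m^k$" (for instance, $J_m^k$ splits into $\min(k,m)$ blocks of nearly equal size). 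Note that for invertible $Y$, the product $CY$ ranges over all matrices $M$ with $\operatorname{col}(M)=W$, and $B-M\equiv B \pmod W$.

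\textbf{Case split on $d$.} Let $d$ be the rank of the induced map $\bar B:\mathbb F^n\to\mathbb F^n/W$, so $d\in\{0,1,2\}$.
\begin{itemize}
\item If $d=2$, a generic $M$ with column space $W$ makes $B-M$ invertible. This is a Zariski-open condition, and it is nonempty by choosing $M$ to complete $B$ on a complement of $\ker\bar B$. Then $B-M$ is a $k$-th power, and we are done.
\item If $d\le 1$, then $B-CY$ has nullity at least $2-d$ for every $Y$, so it cannot be invertible. Instead we aim to choose $Y$ so that
\[
B-CY\sim \operatorname{diag}\bigl(G,\,J_{m}^k\bigr),
\]
with $G$ invertible and $J_m^k$ a nilpotent $k$-th power with nullity $\min(k,m)\ge 2-d$. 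The hypothesis $n\ge k+1$ is exactly what leaves room for this. For $d=1$ we take $m=2$. For $d=0$ we take $m=k$ (giving the zero block $0_k$, with an invertible block of size $n-k\ge1$), or $m=n$ with no invertible block.
\end{itemize}

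\textbf{Constructing $Y$.} Concretely, we choose a basis adapted to $W$ and to $\ker\bar B$. We then prescribe $CY$ on the complement so that it cancels the appropriate part of $B$ and creates the required Jordan chain, while choosing $Y$ invertible elsewhere. If invertibility of $Y$ clashes with the prescription, we allow $Y$ itself to be a nilpotent-type $k$-th power.

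The main obstacle is the case $d=0$, that is, $B$ mapping entirely into $W$, especially for $(4,2,3)$. There the nilpotent part of $B-CY$ must be exactly of a $k$-th power type: $0_3\oplus(\text{invertible }1\times1)$, or $J_4^3=J_2\oplus J_1\oplus J_1$. This must be achieved while $Y$ remains a $k$-th power. I would first try invertible $Y$ and a block-triangular ansatz in the adapted basis, reducing the problem to solving polynomial equations on the free entries. Only if that fails would I fall back to a direct case analysis on the Jordan types of $C$ with nullity $2$.
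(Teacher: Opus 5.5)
Your reduction is correct and is the paper's: the ``only if'' direction and the case $r_0\le 1$ come from Theorem~\ref{thm:General criteria for surjectivity}, leaving $(n,r_0,k)\in\{(3,2,2),(4,2,2),(4,2,3)\}$. Your $d=2$ argument corresponds to the paper's $\dim W=2$ subcase.

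The gap is that surjectivity for $d\le 1$ is never proved. The paragraph on constructing $Y$ is a plan, not an argument, and the target you fix for $d=1$ is impossible in one subcase. Suppose $B(W)\subseteq W$ and the induced endomorphism $\tilde B$ of $\mathbb F^n/W$ is a nonzero nilpotent; this is a $d=1$ situation. Since $CY$ has image in $W$, it induces $0$ on $\mathbb F^n/W$. Hence every $B-CY$ leaves $W$ invariant and induces $\tilde B\sim J_{0,2}$ on the quotient, so it has a Jordan block of size at least $2$ at $0$ (the argument of Proposition~\ref{prop:nonsurjectivity_condition}). No choice of $Y$ gives $B-CY\sim\operatorname{diag}(G,0_2)$.

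What is needed here is a nilpotent part containing $J_{0,k+1}^k\sim J_{0,2}\oplus 0_{k-1}$, and this is precisely where $n\ge k+1$ enters. Let $CY$ agree with $B$ on $W$ and on a lift $u_1$ of $\ker\tilde B$ (note $Bu_1\in W$). Then $B-CY$ has rank one and square zero, so $B-CY\sim J_{0,2}\oplus 0_{n-2}\sim J_{0,k+1}^k\oplus 0_{n-k-1}$. For $(3,2,2)$ and $(4,2,3)$ this is the only admissible shape. You must then check that $Y$ can be taken to be a $k$-th power. For $n\le 4$ it can be chosen invertible, unless $B$ itself has rank one and square zero, in which case $x_2=0$ works. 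This is the paper's subcase $C_{22}\sim J_{0,2}$, settled with Lemma~\ref{lem:Miller}.

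The other $d=1$ situations ($B(W)\not\subseteq W$, or $\tilde B$ with a nonzero eigenvalue) need an argument that $0$ can be made a simple eigenvalue of $B-CY$. The paper does this via Lemma~\ref{lem: nonzero minor} and Proposition~\ref{thm: rank argument}; your proposal supplies nothing there.

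Conversely, $d=0$, which you flag as the main obstacle, is easy. Take $Y$ invertible with $CY|_W=B|_W-\lambda\,\mathrm{id}_W$, for some $\lambda\neq 0$ that is not an eigenvalue of $B|_W$. This is possible by your own remark, since then the column space of $CY$ is $W$. In a basis adapted to $W$, $B-CY=\begin{pmatrix}\lambda I&*\\0&0\end{pmatrix}$ is diagonalizable, $\sim\lambda I_{n-2}\oplus 0_2$, which is a $k$-th power for every $k$. The apparent difficulty came from requiring the nilpotent part to be a single $J_{0,m}^k$. But $0_2=J_{0,1}^k\oplus J_{0,1}^k$ already suffices, so neither $0_3$ nor $J_{0,4}^3$ is needed there.
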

The proof of \cref{thm: Low dimensional classification} is given in \ref{proof: thm 1.3}. As a consequence, we obtain the following corollary.
\begin{corollary}
\label{coro: map A}
Let $\mathbb F$ be an algebraically closed field of characteristic $0$, and $k \geq 2$ an integer. For $A \in M_n(\mathbb F)$ non-zero, consider the polynomial map $\omega : M_n(\mathbb F) \times M_n(\mathbb F) \rightarrow M_n(\mathbb F)$ given by
$$\omega(x_1, x_2) = x_1^k + A x_2^k.$$ 
Then the surjectivity of $\omega$ on $M_n(\mathbb F)$, in terms of the nullity $r_0$ of $A$, is as follows:
\begin{enumerate}
    \item For $n=3$, the surjectivity of the map $\omega$ is described in \cref{tab:n3}. Moreover, when the map is not surjective, the image of $\omega$ is precisely the set 
    $$Z=M_3(\mathbb F)\setminus \left\{\begin{pmatrix}
        \mu & u\\
        0 & M\\
        \end{pmatrix}:\; \mu \in \mathbb F,\; u \in \mathbb F^2,\; M \in M_2(\mathbb F)\setminus\{0\},\; M^2 = 0\right\}.$$
    \item For $n=4$, the surjectivity of the map $\omega$ is described in \cref{tab:n4}.
\end{enumerate}
\end{corollary}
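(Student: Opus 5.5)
Since $A_1=I$ is invertible, \cref{thm: Low dimensional classification} applies directly: $\omega$ is surjective on $M_n(\mathbb F)$ if and only if $n>k(r_0-1)$. For $n=3,4$ this gives the tables by running over $r_0\in\{0,\dots,n-1\}$ (here $r_0\le n-1$ since $A\neq 0$) and $k\ge 2$. For $r_0\le 1$ the map is always surjective. For $r_0=2$ it is surjective iff $k<n$, that is, $k=2$ when $n=3$ and $k\in\{2,3\}$ when $n=4$. For $r_0\ge 3$, which only occurs for $n=4$, $r_0=3$, we have $k(r_0-1)\ge 4=n$, so the map is never surjective. Filling in \cref{tab:n3} and \cref{tab:n4} from this is routine. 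The substantive part is the explicit image $Z$ when $n=3$ and the map fails to be surjective, i.e. $r_0=2$ and $k\ge 3$.

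For the image description I first reduce to a normal form. Simultaneous conjugation $x_i\mapsto Px_iP^{-1}$ replaces $A$ by $PAP^{-1}$ and the image by its conjugate. The set $Z$ (equivalently its complement $W$) should be read as conjugation-invariant: the complement is the set of matrices $B$ having a common eigenvector $e$ such that the induced map on $\mathbb F^3/\langle e\rangle$ is a nonzero square-zero matrix. So I may put $A$ in Jordan form with a $2$-dimensional kernel: $A=\mathrm{diag}(\lambda,0,0)$ with $\lambda\neq0$, or $A=J_2(0)\oplus 0$. The stated shape (first column $(\mu,0,0)^T$, lower block $M$) suggests tracking an invariant line and the quotient $\mathbb F^3/\langle e_1\rangle$. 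Hence I would choose the normal form so that the kernel/image structure of $A$ lines up with this filtration, possibly after transposing conventions.

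To show $W$ misses the image, suppose $B=x_1^k+Ax_2^k\in W$. I would compare $B$ on the appropriate invariant line and quotient. On the part where $A$ acts as zero (the $2$-dimensional kernel), $B$ reduces to a compression of $x_1^k$. The argument used for part (2) of \cref{thm:General criteria for surjectivity} (with $n=3\le k(r_0-1)=2k$) should show that such a compression cannot be a nonzero square-zero $2\times 2$ block when $k\ge3$. The reason is that a $k$-th power with $k\ge 3$ cannot have a single nilpotent Jordan block of size $2$ sitting appropriately, and the multiplication by $A$ cannot repair this because $A$ kills that subspace. I would simply reuse that earlier non-surjectivity mechanism, now applied to the specific target family $W$.

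Conversely, every $B\notin W$ must be hit, and this is the main obstacle. I would split by the Jordan form of $B$ relative to $A$. For $B$ with enough distinct eigenvalues, or invertible $B$, take $x_2=0$ and $x_1$ a $k$-th root of $B$; this works whenever $B$ has no nilpotent Jordan block of size $\ge2$. The remaining cases are the $B$ with a nontrivial nilpotent block that are not in $W$. For those I would take $x_2$ a suitable rank-one or diagonal matrix so that $Ax_2^k$ perturbs $B$ into $B-Ax_2^k$ with no nilpotent blocks of size $\ge 2$, and hence admitting a $k$-th root. This is done case by case over the two normal forms of $A$ and the finitely many Jordan types of $B$ in dimension $3$, with explicit choices of $x_2$ and a determinant/eigenvalue check. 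I expect the delicate case to be $B$ having a block $J_2(0)$ or $J_3(0)$ positioned compatibly with $\ker A$ but not of the excluded shape.
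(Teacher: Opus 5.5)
\textbf{The tables are fine; the description of the image for $n=3$ has a genuine gap.} Your derivation of the two tables from \cref{thm: Low dimensional classification} with $A_1=I$ is the paper's route and works, apart from a slip. For $r_0=2$ one has $k(r_0-1)=k$, not $2k$; with $2k$ you would also predict failure at $k=2$. The gap concerns the image for $n=3$, $r_0=2$, $k\ge 3$, and it starts with your claim that the excluded set should be read as conjugation-invariant. It is not. Since $\operatorname{rank}A=1$, write $A=ab^\top$. Because $0$ and every invertible matrix are $k$-th powers, $\{Ax_2^k : x_2\in M_3(\mathbb F)\}=\{aw^\top : w\in\mathbb F^3\}$. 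So the image of $\omega$ is $\{Y+aw^\top\}$ with $Y$ ranging over $k$-th powers, and it depends on the line $\operatorname{Im}A$. The correct coordinate-free complement is the set of $C$ with $C(\operatorname{Im}A)\subseteq\operatorname{Im}A$ whose induced map on $\mathbb F^3/\operatorname{Im}A$ is nonzero of square zero. Your $W$, which allows an arbitrary eigenvector $e$, is strictly larger. For $A=J_{0,2}\oplus J_{0,1}$ (the matrix unit $E_{12}$), $A=\omega(0,I)$ lies in the image. Yet $Ae_3=0$, and $A$ induces a nonzero square-zero map on $\mathbb F^3/\langle e_3\rangle$. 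So ``$W$ misses the image'' is false as you set it up. The fixed-coordinate set $Z$ of the statement is correct when $\operatorname{Im}A=\langle e_1\rangle$, which holds for both normal forms $(\lambda)\oplus J_{0,1}\oplus J_{0,1}$ and $J_{0,2}\oplus J_{0,1}$; these give the same image. Once $A$ is fixed this way you may only conjugate by the stabilizer of $\langle e_1\rangle$. So a case split over ``the finitely many Jordan types of $B$'' is not available.

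\textbf{The exclusion step uses the wrong subspace and a false claim, and the reverse inclusion lacks the key reduction.} The relevant subspace is $\operatorname{Im}A$ (equivalently the left kernel of $A$), not $\ker A$. For $A=E_{12}$ one has $\ker A=\langle e_1,e_3\rangle$, while $Ax_2^k$ changes the first row. A pure compression statement is also false: $E_{12}+E_{23}+E_{31}$ is invertible, hence a $k$-th power, and its lower right $2\times 2$ block is nonzero of square zero. What works is invariance. If $Ce_1=\mu e_1$ and $C=x_1^k+e_1w^\top$, then $x_1^ke_1\in\langle e_1\rangle$, and $M$ is the map induced by $x_1^k$ on $\mathbb F^3/\langle e_1\rangle$. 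Hence $z^2$ divides the minimal polynomial of $x_1^k$. But for $k\ge 3$ every $k$-th power $Y\in M_3(\mathbb F)$ satisfies $\ker Y=\ker Y^2$, because $x_1$ restricted to its generalized $0$-eigenspace is nilpotent of index at most $3\le k$. For the reverse inclusion, the missing idea is the same reduction. You may replace the first row $c_1$ of $C$ by an arbitrary $d_1=(d_{11},d_{12},d_{13})$, and you need the resulting $D$ to satisfy $\ker D=\ker D^2$. Then there are three cases:
\begin{itemize}
\item If $c_2,c_3$ are independent, take $d_1\notin\operatorname{span}(c_2,c_3)$, so $D$ is invertible.
\item If $c_2=c_3=0$, take $d_1=e_1$, so $D=E_{11}$.
\item If $\dim\operatorname{span}(c_2,c_3)=1$, then $\det D=0$, and the coefficient of $z$ in $\chi_D$ is the linear form $d_{11}\operatorname{tr}(C_{22})-d_{12}c_{21}-d_{13}c_{31}$, where $C_{22}$ is the lower right $2\times2$ block of $C$. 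This form can be made nonzero, making $0$ a simple eigenvalue, unless $c_{21}=c_{31}=0$ and $\operatorname{tr}C_{22}=0$. That is exactly the excluded shape.
\end{itemize}
This is what Case~2 of \cref{prop: n=3} (subcases by $\dim W$) does in the normal form. Your plan of perturbing $B$ according to its Jordan type never isolates this criterion.
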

\begin{table}[htbp]
\centering
\renewcommand{\arraystretch}{1.3} 
\begin{tabular}{|c|c|c|} 
\hline nullity of $A$ & $k=2$ & $k \geq 3$ \\ 
\hline $r_0 = 0$ & Surjective & Surjective \\ 
\hline $r_0 = 1$ & Surjective & Surjective \\ 
\hline $r_0=2$ & Surjective & Not surjective (The image is exactly the set $Z$)\\ 
\hline \end{tabular} \vspace{0.2cm} 
\caption{Surjectivity of $\omega(x_1,x_2)=x_1^k + A x_2^k$ on $M_3(\mathbb F)$}
\label{tab:n3}
\end{table}

\begin{table}[htbp] 
\centering
\renewcommand{\arraystretch}{1.3} 
\begin{tabular}{|c|c|c|c|} 
\hline nullity of $A$ & $k=2$ & $k=3$ & $k \geq 4$ \\ 
\hline $r_0=0$ & Surjective & Surjective & Surjective \\ 
\hline $r_0=1$ & Surjective & Surjective & Surjective \\ 
\hline $r_0=2$ & Surjective & Surjective & Not Surjective \\ 
\hline $r_0=3$ & Not surjective & Not surjective & Not surjective \\ \hline \end{tabular} 
\vspace{0.2cm} 
\caption{Surjectivity of $\omega(x_1,x_2)=x_1^k + A x_2^k$ on $M_4(\mathbb F)$}
\label{tab:n4}
\end{table}

The structure of this article is as follows. In \cref{sec: Preliminaries}, we introduce the problem and reduce it to a more tractable form. In \cref{sec:atmost one nil block}, we investigate the surjectivity of the map in the case where the $\operatorname{nullity}(A_2)$ is at most one. We then turn, in \cref{sec: more than one nil block}, to the setting where $\operatorname{nullity}(A_2)\geq 2$. We establish a necessary condition for surjectivity in \cref{subsec: necessary condition}. The proof of \cref{thm:General criteria for surjectivity} is completed in \cref{proof: thm 1.2}. Finally, the low-dimensional cases are treated in \cref{proof: thm 1.3}, where we prove \cref{thm: Low dimensional classification}. The proof of Corollary~\ref{coro: map A} is given in Subsection~\ref{proof: coro 1.4}.

\subsection{Notation}
Throughout, $\mathbb F$ denotes an algebraically closed field of characteristic $0$, and the split central simple algebra over $\mathbb F$ is $M_n(\mathbb F)$, the algebra of all $n \times n$ matrices. We write the automorphism group of $M_n(\mathbb F)$ as $\mathrm{PGL}_n(\mathbb F)$ for the projective general linear group, which acts on $M_n(\mathbb F)$ by conjugation.

For a matrix $C \in M_n(\mathbb F)$, we denote its rows by $c_1, \dots, c_n \in \mathbb F^n$ and $C=(c_1, \ldots, c_n)^\top$. For $1 \leq i \leq n$, let $e_i$ denote the $i$-th standard basis vector of $\mathbb F^n$. For $C_1 \in M_{n_1}(\mathbb F)$ and $C_2 \in M_{n_2}(\mathbb F)$, we define their direct sum by $ C_1 \oplus C_2 = \begin{pmatrix}
C_1 & 0 \\ 0 & C_2 \end{pmatrix} \in M_{n_1+n_2}(\mathbb F)$. For $B \in M_n(\mathbb F)$, we write $B \sim \bigoplus_{i=1}^r J_{\lambda_i,n_i}, \qquad \sum_{i=1}^r n_i = n$, to denote a Jordan canonical form of $B$, where $J_{\lambda_i,n_i}$ is the Jordan block of size $n_i$ corresponding to the eigenvalue $\lambda_i$. We denote by $r_0(B)$ the nullity of $B$, which turns out to be the number of Jordan blocks corresponding to the eigenvalue $0$, and by $r'(B)$ the number of Jordan blocks corresponding to nonzero eigenvalues. Thus, $ r(B) = r_0(B) + r'(B)$. We define $n_0(B) = \sum_{\lambda_i = 0} n_i$, the dimension of the generalized eigenspace of $B$ corresponding to the eigenvalue $0$. If $J$ is a Jordan canonical form of $B$, we write $J'$ for the direct sum of all Jordan blocks corresponding to nonzero eigenvalues. The characteristic polynomial of $B$ is denoted by $\chi_B(z) \in \mathbb F[z]$.

Finally, let $T = (t_0,t_1,\dots,t_{n-1})^\top \in M_n(\mathbb F)$, and let $\mathcal{I} \subseteq \{1,2,\dots,n-1\}$. We define $T_{\mathcal{I}} \in M_{|\mathcal{I}| \times |\mathcal{I}|}(\mathbb F)$ to be the submatrix whose rows are indexed by $\mathcal{I}$ and whose columns are indexed by $\{i+1 : i \in \mathcal{I}\}$, that is $T_{\mathcal{I}} := (t_{i,\,j+1})_{i,j \in \mathcal{I}}$.

\section{Preliminaries}
\label{sec: Preliminaries}
In this section, we reduce the problem to a canonical form. 
Let $A_1, A_2 \in M_n(\mathbb{F})$ where $A_1$ is invertible, and let $k\geq 2$ be an integer. Consider the map
$$\omega : M_n(\mathbb{F})^2 \to M_n(\mathbb{F}), \qquad
\omega(x_1,x_2) = A_1x_1^k + A_2x_2^k.$$
We need to study the surjectivity of $\omega$.

\textbf{Reduction via conjugation.}
The property that $\omega$ is surjective is invariant under conjugation. More precisely, for any $g \in \mathrm{GL}_n(\mathbb{F})$, define
$$\omega_g(x_1,x_2) = (gA_1g^{-1}) x_1^{k} + (gA_2g^{-1}) x_2^{k}.$$
Then $\omega$ is surjective if and only if $\omega_g$ is surjective. Hence, it suffices to study the problem for pairs $(A_1, A_2)$ up to simultaneous conjugation.

\textbf{Reduction to a single matrix.}
Since we have $A_1$ invertible, define $B = A_1^{-1}A_2 \in M_n(\mathbb{F})$. Then $\omega$ is surjective if and only if the map
$$\widetilde{\omega}(x_1,x_2) = x_1^{k} + B x_2^{k}$$
is surjective. Indeed, $\omega(X_1,X_2)=C$ if and only if $\widetilde{\omega}(X_1,X_2)=A_1^{-1}C$.

\textbf{Reduction to Jordan form.}
Since $\mathbb{F}$ is algebraically closed, every matrix is conjugate to its Jordan canonical form. By the previous reduction, we may therefore assume that $B$ is conjugate to 
$$
J = \bigoplus_{i=1}^{r} J_{\lambda_i,n_i}, \qquad \sum_{i=1}^r n_i = n,
$$
where $J_{\lambda_i,n_i}$ denotes the Jordan block of size $n_i$ corresponding to the eigenvalue $\lambda_i \in \mathbb{F}$. 

We relate the number of nilpotent blocks of $B$ to the $\operatorname{nullity}(B)$.
\begin{lemma}
\label{lem:nullity-zero-blocks}
Let $B \in M_n(\mathbb F)$, and let $J$ be its Jordan canonical form over an algebraic closure of $\mathbb F$. Then $\dim \ker(B)$
is equal to the number of Jordan blocks of $J$ corresponding to the eigenvalue $0$.
\end{lemma}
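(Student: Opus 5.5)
The plan is to pass to the algebraic closure $\overline{\mathbb F}$ and compute the kernel of the Jordan form block by block. Nullity does not change under field extension: $\dim_{\mathbb F}\ker(B)=n-\operatorname{rank}(B)$, and the rank of a matrix with entries in $\mathbb F$ is the same over $\overline{\mathbb F}$, since it is detected by nonvanishing minors. So it suffices to prove $\dim_{\overline{\mathbb F}}\ker(B)$ equals the number of zero-eigenvalue blocks. If $J=PBP^{-1}$ with $P\in \mathrm{GL}_n(\overline{\mathbb F})$, then $\ker(J)=P\ker(B)$, so we may replace $B$ by $J$.

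Next I use the block structure. Write $J=\bigoplus_{i=1}^r J_{\lambda_i,n_i}$. Since $J$ is block diagonal, $\ker(J)=\bigoplus_i \ker(J_{\lambda_i,n_i})$, so $\dim\ker(J)=\sum_i \dim\ker(J_{\lambda_i,n_i})$. This reduces the lemma to a single Jordan block.

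For a single block there are two cases. If $\lambda_i\neq 0$, then $J_{\lambda_i,n_i}$ is upper triangular with determinant $\lambda_i^{n_i}\neq 0$, so its kernel is zero. If $\lambda_i=0$, then $J_{0,n_i}$ is the shift sending $e_{j+1}\mapsto e_j$ and $e_1\mapsto 0$. Its rank is $n_i-1$, because the columns $2,\dots,n_i$ are the distinct standard vectors $e_1,\dots,e_{n_i-1}$ and the first column is zero. Hence its kernel is $\operatorname{span}(e_1)$, which has dimension $1$.

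Summing over the blocks, each nilpotent block contributes exactly $1$ and every other block contributes $0$, which gives the statement. There is no real obstacle. The only point needing care is the first step, the invariance of nullity under extension of scalars, which follows from the rank/minor argument above.
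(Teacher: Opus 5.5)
Your proof is correct and follows the paper's argument: replace $B$ by its Jordan form via similarity, split the kernel over the diagonal blocks, and observe that each invertible block contributes $0$ while each nilpotent block contributes exactly $1$. Your additional check that nullity is unchanged under extension of scalars to $\overline{\mathbb F}$ (via nonvanishing minors) fills in a point the paper leaves implicit when it conjugates over the algebraic closure.
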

\begin{proof}
Since similarity preserves the dimension of the kernel, we may assume that $B = J$. Write $J$ as a direct sum of Jordan blocks:
$$
J = \bigoplus_{i=1}^r J_{\lambda_i}.
$$
If $\lambda_i \neq 0$, then $J_{\lambda_i}$ is invertible, and hence $\ker(J_{\lambda_i}) = \{0\}$. If $\lambda_i = 0$, then $J_{\lambda_i}$ is a nilpotent Jordan block, and a direct computation shows that $\dim \ker(J_{\lambda_i}) = 1$.
Since the kernel of a block diagonal matrix is the direct sum of the kernels of its blocks, we obtain
$$
\dim \ker(J) = \#\{\text{Jordan blocks corresponding to } 0\}.
$$
The result follows.
\end{proof}
We denote by $r_0$ both the nullity of $B$ and the number of Jordan blocks of $B$ corresponding to the eigenvalue $0$.
\section{Surjectivity with at most one Jordan nilpotent block}\label{sec:atmost one nil block}

In this section, we consider matrices whose Jordan canonical form contains at most one block corresponding to the eigenvalue $0$.

\subsection{Both the coefficients are invertible}
We begin with the case in which both coefficients are invertible.
\begin{proposition}\label{thm:invertibleB}
Let $k\geq 2$ be a positive integer and let $B \in \mathrm{GL}_n(\mathbb F)$. Then the map $\widetilde{\omega}(x_1,x_2) = x_1^{k} + B x_2^{k}$ is surjective on $M_n(\mathbb F)$.
\end{proposition}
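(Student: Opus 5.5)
The plan is to make the second summand a scalar matrix and then use the fact that every invertible matrix over $\mathbb F$ is a $k$-th power. Fix $C \in M_n(\mathbb F)$. We look for $X_2$ of the form $X_2 = sI$ with $s \in \mathbb F$, so that $BX_2^k = s^k B$. It then suffices to choose $c = s^k$ so that $C - cB$ is invertible, and to take $X_1$ a $k$-th root of $C - cB$.

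\textbf{Step 1 (choice of the scalar).} Consider $p(c) = \det(C - cB) \in \mathbb F[c]$. Expanding the determinant, the coefficient of $c^n$ is $(-1)^n \det B$. This is nonzero because $B \in \mathrm{GL}_n(\mathbb F)$. Hence $p$ is a nonzero polynomial of degree $n$ and has at most $n$ roots. Since $\mathbb F$ is infinite, we can pick $c \in \mathbb F$ with $p(c) \neq 0$. Since $\mathbb F$ is algebraically closed, we can then pick $s$ with $s^k = c$. Setting $X_2 = sI$ gives $C - BX_2^k = C - cB \in \mathrm{GL}_n(\mathbb F)$.

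\textbf{Step 2 ($k$-th roots of invertible matrices).} We must show that every $D \in \mathrm{GL}_n(\mathbb F)$ satisfies $D = X_1^k$ for some $X_1$. By conjugation it suffices to treat a Jordan form $\bigoplus_i J_{\lambda_i,n_i}$ with all $\lambda_i \neq 0$, and to find a root block by block. Write $J_{\lambda,m} = \lambda(I + N)$ with $N$ nilpotent, $N^m = 0$. Choose $\mu$ with $\mu^k = \lambda$ and set
$$R = \mu \sum_{j=0}^{m-1} \binom{1/k}{j} N^j.$$
This expression makes sense because $\operatorname{char}\mathbb F = 0$, so $\binom{1/k}{j} \in \mathbb Q \subseteq \mathbb F$. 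The identity $\bigl(\sum_{j\ge 0} \binom{1/k}{j} z^j\bigr)^k = 1 + z$ holds in $\mathbb Q[[z]]$. Truncating it modulo $z^m$ and substituting $z = N$ gives $R^k = \lambda(I+N) = J_{\lambda,m}$. Conjugating back produces $X_1$ with $X_1^k = C - cB$.

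\textbf{Conclusion.} We obtain $\widetilde{\omega}(X_1, sI) = C$. Since $C$ was arbitrary, $\widetilde{\omega}$ is surjective.

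The only step with real content is Step 2, the existence of $k$-th roots of invertible matrices. It is standard, and the truncated binomial series handles it cleanly in characteristic $0$. The invertibility of $B$ enters only in Step 1, to guarantee that $\det(C - cB)$ is not identically zero.
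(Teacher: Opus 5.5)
Your proof is correct and follows the same strategy as the paper: make one summand a scalar $k$-th power, chosen so that the remaining matrix is invertible, and then extract a $k$-th root of that invertible matrix. The only difference is that the paper puts the scalar in $X_1$ (taking $\lambda$ not an eigenvalue of $C$ and rooting $B^{-1}(C-\lambda I)$), whereas you put it in $X_2$ and use the determinant polynomial $\det(C-cB)$, which is the argument of Lemma~\ref{lem:C'inv}; you also prove the $k$-th root fact that the paper only cites, via the truncated binomial series, and that step is where characteristic $0$ is used.
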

\begin{proof}
Let $C \in M_n(\mathbb F)$. Since $\mathbb F$ is algebraically closed, there exists $\lambda \in \mathbb F^\times$ such that $\lambda$ is not an eigenvalue of $C$. Set
$$
X_1 = \alpha I_n, \quad \text{where } \alpha^{k} = \lambda.
$$
Then $X_1^{k} = \lambda I_n$, and hence $C - X_1^{k}$ is invertible. It follows that $B^{-1}(C - X_1^{k}) \in \mathrm{GL}_n(\mathbb F)$. Over an algebraically closed field, every invertible matrix admits a $k$-th root. Thus there exists $X_2 \in M_n(\mathbb F)$ such that
$X_2^{k} = B^{-1}(C - X_1^{k})$. Therefore,
$$
C = X_1^{k} + B X_2^{k},
$$
which proves surjectivity.
\end{proof}

In view of Proposition~\ref{thm:invertibleB}, the map $\widetilde{\omega}$ is surjective whenever $B$ is invertible. Hence, it suffices to consider the case where $B$ is singular. In this setting, the structure of the nilpotent part of $B$, as encoded in its Jordan canonical form, plays a central role in the analysis.

\subsection{Full size nilpotent case}
We study the surjectivity of the map  $\widetilde\omega(x,y) = x^{k} + Jy^{k}$ where $J=J_{0,n}$, the (nilpotent) Jordan matrix of size $n$.
\begin{lemma}\label{lem:T-kth}
Let $T = (t_0, t_1, \dots, t_{n-1})^\top \in M_n(\mathbb F)$, where $t_0 = (1, 0, \ldots, 0)$. Suppose that exactly $m$ of the rows out of $t_1, \dots, t_{n-1}$ are zero, and let $ \mathcal I \subseteq \{1, \dots, n-1\}$ denote the set of indices of the nonzero rows. If $T_\mathcal I$ is invertible, then $T$ is a $k$-th power of a matrix in $M_n(\mathbb F)$ for every integer $k \geq 1$.
\end{lemma}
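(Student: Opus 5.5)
The plan is to show that $T$ is similar to $G \oplus 0_m$ with $G$ invertible. The $k$-th root can then be assembled blockwise: invertible matrices over the algebraically closed field $\mathbb F$ of characteristic $0$ admit $k$-th roots (the fact already used in Proposition~\ref{thm:invertibleB}), and $0_m = 0_m^k$. Conjugating such a block root back gives a $k$-th root of $T$.

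\textbf{Step 1 (block form).} First fix the indexing: the rows are indexed by $0,\dots,n-1$ and the columns by $1,\dots,n$. Under this convention the column $j+1$ used in $T_{\mathcal I}$ corresponds to the same coordinate as row $j$, so $T_{\mathcal I}$ is a principal submatrix of $T$. Let $\mathcal Z$ be the set of indices of the zero rows, so $|\mathcal Z| = m$. Reorder the basis by a permutation as $\{0\}\cup\mathcal I\cup\mathcal Z$. In this order $T$ becomes
$$P T P^{-1}=\begin{pmatrix} 1 & 0 & 0\\ a & T_{\mathcal I} & c\\ 0 & 0 & 0\end{pmatrix},$$
where $P$ is the permutation matrix, and $a$, $c$ are the corresponding pieces of the rows $t_i$ for $i\in\mathcal I$. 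The zero blocks in the first row come from $t_0=(1,0,\dots,0)$, and the bottom row is zero because the rows indexed by $\mathcal Z$ vanish.

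\textbf{Step 2 (the eigenvalue $0$).} Expanding the determinant along the first and last block rows gives $\chi_T(z)=(z-1)\,z^{m}\,\chi_{T_{\mathcal I}}(z)$. Since $T_{\mathcal I}$ is invertible, $0$ has algebraic multiplicity exactly $m$. Next compute the kernel directly. A vector $(v_0,v_{\mathcal I},v_{\mathcal Z})$ lies in $\ker T$ exactly when $v_0=0$ and $T_{\mathcal I}v_{\mathcal I}+c\,v_{\mathcal Z}=0$. These conditions force $v_{\mathcal I}=-T_{\mathcal I}^{-1}c\,v_{\mathcal Z}$, with $v_{\mathcal Z}$ free, so $\dim\ker T=m$. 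By Lemma~\ref{lem:nullity-zero-blocks}, $T$ therefore has $m$ nilpotent Jordan blocks whose sizes sum to $m$. Hence every nilpotent block has size $1$, and $T\sim J'\oplus 0_m$ with $J'$ invertible.

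\textbf{Step 3 (roots).} Take $Y$ with $Y^k=J'$, which exists because $J'$ is invertible. Then $(Y\oplus 0_m)^k=J'\oplus 0_m$, and conjugating back gives a $k$-th root of $T$ for every $k\ge 1$.

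The only delicate point is Step 1: interpreting the index convention in the definition of $T_{\mathcal I}$ correctly, so that it really is the principal block of $T$ on $\mathcal I$. Once that is settled, the multiplicity count in Step 2 is routine.
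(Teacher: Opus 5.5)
Your proposal is correct and follows the paper's route: compute $\chi_T(z)=(z-1)z^m\chi_{T_{\mathcal I}}(z)$, conclude $T\sim A\oplus 0_m$ with $A$ invertible, and take a $k$-th root blockwise. The paper asserts this similarity directly from the algebraic multiplicity. Your kernel computation giving $\dim\ker T=m$ supplies the step it omits, namely that all nilpotent Jordan blocks have size $1$.
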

\begin{proof}
The characteristic polynomial of $T$ is  
\begin{equation*}
\chi_T(z)=\det(zI-T)=z^{m}(z-1) \chi_{T_\mathcal I}(z).
\end{equation*}
The eigenvalues of $T$ are, say, $\{0, 1, \lambda_1, \lambda_2,\dots,\lambda_{n-m-1}\}$ if $n-m-1 \geq 1$ with $\lambda_i \neq 0$. Otherwise, the eigenvalues are simply $0$ and $1$. In both cases, $0$ has algebraic multiplicity $m$. The matrix $T$ is similar to $\begin{pmatrix} A_{n-m} & 0\\ 0 & 0_{m} \end{pmatrix}$ where $A_{n-m}$ is a matrix whose eigenvalues are all the non-zero eigenvalues of $T$. Therefore, $A_{n-m}$ is invertible and can be written as the $k$-th power of some matrix in $M_{n-m}(\mathbb F)$. Since $0_{m}^k=0$, it follows that $T$ is a $k$-th power of a matrix in $M_n(\mathbb F)$.
\end{proof}
\begin{lemma}\label{lem:C-LI}
Let $C \in M_n(\mathbb F)$ have exactly $m < n$ linearly independent rows indexed by $\{i_1, i_2, \ldots, i_m\} \subset \{1, 2, \dots, n\}$ with $i_m = n$, and suppose all other rows are zero. Then there exist vectors $t_j \in \mathbb F^n$, for $j \in \{1, 2, \ldots, n\}\setminus \{i_1, i_2, \dots, i_m\}$, such that:
\begin{enumerate}
\item the set $\{c_{i_1}, c_{i_2}, \dots, c_{i_m}\} \cup \{t_j : j \notin \{i_1,\dots,i_m\}\}$ is linearly independent in $\mathbb F^n$, and
\item $\det(T_\mathcal I) \neq 0$, where $\mathcal I = \{j_1, j_2, \dots, j_{n-m}\} = \{1, 2,\dots, n\} \setminus \{i_1, i_2, \dots, i_m\}$.
\end{enumerate}
\end{lemma}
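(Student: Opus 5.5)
The plan is a genericity argument. I regard the unknown vectors $t_j$, for $j \in \mathcal I$, as free variables in the affine space $\mathbb F^{n(n-m)}$. I then show that each of conditions (1) and (2) is the non-vanishing of a polynomial in these variables that is not identically zero. Since $\mathbb F$ is infinite (it is algebraically closed), the product of two nonzero polynomials is nonzero and hence does not vanish at some point. Any such point gives the required vectors.

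\emph{Setup.} Let $T$ be the $n\times n$ matrix whose $i$-th row is $c_i$ for $i\in\{i_1,\dots,i_m\}$ and $t_i$ for $i\in\mathcal I$. Because $i_m=n$, we have $n\notin\mathcal I$, so every $j\in\mathcal I$ satisfies $j+1\le n$. Thus the column index set $\{j+1 : j\in\mathcal I\}$ lies in $\{1,\dots,n\}$, and $T_{\mathcal I}=(t_{i,j+1})_{i,j\in\mathcal I}$ is a well-defined $(n-m)\times(n-m)$ matrix. Only the rows $t_j$, $j\in\mathcal I$, enter $T_{\mathcal I}$, so $P_2:=\det T_{\mathcal I}$ is a polynomial in the entries of the $t_j$ alone. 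Condition (1) is equivalent to $P_1:=\det T\neq 0$, which is also a polynomial in these entries, with the $c_i$ held fixed.

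\emph{Nonvanishing.} For $P_2$, take $t_j=e_{j+1}$ for all $j\in\mathcal I$. Then $t_{i,j+1}=\delta_{ij}$, so $T_{\mathcal I}$ is the identity and $P_2\not\equiv 0$. For $P_1$, the $m$ vectors $c_{i_1},\dots,c_{i_m}$ are linearly independent, so they extend to a basis of $\mathbb F^n$ by some $n-m$ vectors. Assigning these extra vectors to the positions $j\in\mathcal I$ in any order makes $T$ invertible, since row order does not affect whether the rows form a basis. Hence $P_1\not\equiv 0$.

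\emph{Conclusion.} The polynomial ring over a field is a domain, so $P_1P_2\not\equiv 0$. A nonzero polynomial over an infinite field has a non-root, so we can choose values of the $t_j$ with $P_1P_2\neq0$. These values satisfy both (1) and (2). The only point needing care is the indexing observation above, namely that $n\notin\mathcal I$ keeps the columns $j+1$ in range; the rest is routine.
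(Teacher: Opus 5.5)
Your proof is correct, but it takes a different route from the paper's.

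\textbf{The paper's route.} The paper builds the $t_{j_s}$ one at a time, using the projection $\pi(x)=(x_{j_1+1},\dots,x_{j_{n-m}+1})$. At step $s$ it picks $y\in\mathbb F^{n-m}$ outside $\operatorname{span}\{\pi(t_{j_1}),\dots,\pi(t_{j_{s-1}})\}$ and lifts it to some $u_s$. It then moves $u_s$ within the fibre $u_s+\ker\pi$ to leave $U_s=\operatorname{span}\{c_{i_1},\dots,c_{i_m},t_{j_1},\dots,t_{j_{s-1}}\}$. At the end the $c$'s and $t$'s form a basis of $\mathbb F^n$, and the $\pi(t_j)$ form a basis of $\mathbb F^{n-m}$, i.e.\ $T_{\mathcal I}$ is invertible.

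\textbf{Your route.} You write (1) and (2) as the nonvanishing of two polynomials in the entries of the $t_j$. You give a separate witness for each: $t_j=e_{j+1}$ for (2), and a basis completion for (1). You then intersect the two nonempty Zariski-open sets, using that $\mathbb F$ is infinite.

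\textbf{Comparison.} Your argument is shorter and avoids the bookkeeping of the inductive step. In particular, the paper's contradiction ``$y\in\pi(U_s)$'' silently needs a dimension count: $\ker\pi\subseteq U_s$ forces $\dim\pi(U_s)=s-1$, hence $\pi(U_s)=\operatorname{span}\{\pi(t_{j_1}),\dots,\pi(t_{j_{s-1}})\}$. Your approach also shows that a generic choice of the $t_j$ works, so further open conditions can be imposed at no extra cost. The paper's construction, on the other hand, is pure linear algebra and works over any field, finite ones included. Yours uses that $\mathbb F$ is infinite, which is harmless here since $\mathbb F$ is algebraically closed. Your remark that $i_m=n$ keeps the column indices $j+1$ in range is the same fact that makes the paper's $\pi$ well defined.
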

\begin{proof} Let $W = \mathrm{span} \{c_{i_1}, c_{i_2},\ldots, c_{i_m}\} \subset \mathbb F^n, \quad \dim W = m$. 
We construct the vectors $t_{j_1}, \dots, t_{j_{n-m}}$ inductively so that:
\label{method}
\begin{enumerate}
\item $\{c_{i_1}, \dots, c_{i_m}, t_{j_1},\dots,t_{j_{n-m}}\}$ is a basis of $\mathbb F^n$, and
\item the vectors $\pi(t_{j_1}),\dots,\pi(t_{j_{n-m}})$ form a basis of $\mathbb F^{n-m}$,
\end{enumerate}
where $\pi : \mathbb F^n \to \mathbb F^{n-m}$ is defined by
$$
\pi_1(x_1,\dots,x_n) = (x_{j_1+1},\dots,x_{j_{n-m}+1}).
$$
We proceed by induction. Suppose that for some $s \ge 1$, we have chosen $t_{j_1}, \dots, t_{j_{s-1}}$ such that:
\begin{enumerate}[label=(\alph*)]
\item the set $\{c_{i_1}, \dots, c_{i_m}, t_{j_1},\dots, t_{j_{s-1}}\}$ is linearly independent, and
\item $\pi(t_{j_1}), \dots, \pi(t_{j_{s-1}})$ are linearly independent.
\end{enumerate}
Let $U_s = \mathrm{span}\{c_{i_1}, \dots, c_{i_m}, t_{j_1},\dots,t_{j_{s-1}}\}$ and $ \dim U_s = m + (s-1)$. 
Since $s-1 < n-m$, the vectors $\pi(t_{j_1}), \dots, \pi(t_{j_{s-1}})$ do not span $\mathbb F^{n-m}$. Hence there exists $y \in \mathbb F^{n-m}$ such that $y \notin \mathrm{span}\{\pi(t_{j_1}), \dots, \pi(t_{j_{s-1}})\}$. Since $\pi$ is surjective, choose $u_s \in \mathbb F^n$ such that $\pi(u_s) = y$. Now consider the affine subspace 
$u_s + \ker(\pi) = \{u_s + w : w \in \ker(\pi)\}$. For any $w \in \ker(\pi)$, we have $\pi(u_s + w) = \pi(u_s)$, so the projection condition is preserved. We claim that there exists $w \in \ker(\pi)$ such that
$$
t_{j_s} := u_s + w \notin U_s.
$$
If not, then $u_s + \ker(\pi) \subseteq U_s$, which implies $u_s\in U_s$ and hence $\ker(\pi) \subseteq U_s$. Therefore, $y = \pi(u_s) \in \pi(U_s)$, contradicting the choice of $y$.
Thus such a $w$ exists, and we define $t_{j_s} = u_s + w$.
Proceeding inductively, we obtain vectors $t_{j_1},\dots, t_{j_{n-m}}$ such that: \begin{enumerate}
    \item the set $\{c_{i_1},c_{i_2},\dots,c_{i_m},t_{j_1},\dots,t_{j_{n-m}}\}$ forms the basis of $\mathbb F^n$, and 
    \item the set $\{\pi({t_{j_1}}),\dots,\pi(t_{j_{n-m}})\}$ forms basis of $\mathbb F^{n-m}$.
\end{enumerate}
 
The latter condition implies that the matrix $ T_\mathcal I$ is invertible. This completes the proof.
\end{proof}

\begin{proposition}\label{thm:full size nilpotent}
Let $C \in M_n(\mathbb F)$. Then the equation
$$C = x_1^k + J_{0,n} x_2^k$$
admits a solution for every integer $k \geq 2$.
\end{proposition}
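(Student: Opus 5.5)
We look for a solution with $X_1^k=T$ and $X_2^k=Z$, where $T$ is a $k$-th power and $Z$ is invertible. The relation $C=T+J_{0,n}Z$ is linear in the rows. Indeed, $J_{0,n}Z$ has $i$-th row $z_{i+1}$ for $1\le i\le n-1$ and last row $0$. So the equation is equivalent to
$$t_i=c_i-z_{i+1}\ (1\le i\le n-1),\qquad t_n=c_n,$$
with $z_1$ completely free. We therefore parametrize everything by the free data $(t_1,\dots,t_{n-1},z_1)\in\mathbb F^{n^2}$. In these parameters $T=(t_1,\dots,t_{n-1},c_n)^\top$ and $Z=(z_1,\,c_1-t_1,\dots,c_{n-1}-t_{n-1})^\top$. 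Both $T$ and $Z$ depend polynomially (in fact affinely) on the parameters. The plan is to exhibit two nonzero polynomial conditions on this affine space and take a common nonvanishing point, which exists because $\mathbb F$ is infinite.

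\emph{Condition on $Z$.} Require $\det Z\neq 0$. This polynomial is not identically zero: take $z_1=e_1$ and $t_i=c_i-e_{i+1}$, which gives $Z=I_n$. Any invertible $Z$ has a $k$-th root over the algebraically closed field $\mathbb F$, as used in Proposition~\ref{thm:invertibleB}, so $X_2$ exists.

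\emph{Condition on $T$, case $c_n\neq 0$.} Require $\det T\neq 0$. This is not identically zero, since $c_n\ne 0$ can be completed by $n-1$ further rows to a basis of $\mathbb F^n$. Then $T$ is invertible, hence a $k$-th power.

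\emph{Condition on $T$, case $c_n=0$.} Here $T$ is necessarily singular, and the point (in the spirit of Lemma~\ref{lem:T-kth}) is to force the eigenvalue $0$ to be semisimple of multiplicity one. Then $T\sim A\oplus 0_1$ with $A$ invertible, and $T$ is a $k$-th power. Write $\chi_T(z)=z\,q(z)$. We require the coefficient of $z$ in $\chi_T$, namely $\pm q(0)$, to be nonzero. This is a polynomial in the entries of $T$, and it is nonzero at $t_i=e_i$, where $T=\mathrm{diag}(1,\dots,1,0)$. When it is nonzero, $0$ has algebraic multiplicity exactly one, so the zero Jordan block is $0_1$.

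In either case, the product of the polynomial for $T$ and $\det Z$ is a nonzero polynomial on $\mathbb F^{n^2}$. It therefore has a nonvanishing point, which yields $T=X_1^k$ and $Z=X_2^k$ with $C=X_1^k+J_{0,n}X_2^k$.

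The main obstacle is making sure both conditions are imposed on the \emph{same} parameter space, so that choices for $T$ do not destroy invertibility of $Z$. This is handled by the row-wise parametrization above, the irreducibility of affine space, and the explicit witnesses showing that each polynomial is nonzero. The subcase $c_n=0$ is the delicate one, because $T$ cannot be invertible there.
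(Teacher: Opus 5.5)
Your proof is correct, and it takes a genuinely different route from the paper's. Note the naming swap: your $T$ is the paper's $C-J_{0,n}T$, and your $Z$ is the paper's $T$.

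The paper argues by explicit construction in two cases.
\begin{itemize}
\item When $c_n\neq 0$, it only makes $C-J_{0,n}T$ invertible and lets $T$ have zero rows. It then needs Lemma~\ref{lem:C-LI}, a basis extension compatible with the shifted-coordinate projection, and Lemma~\ref{lem:T-kth}, to see that such a singular $T$ has a semisimple zero eigenvalue and hence is a $k$-th power. It also uses a preliminary ``reordering of the rows''. That step is not literally a symmetry of the equation, since permuting rows does not preserve $J_{0,n}$; it must be read as working with an arbitrary index set $i_1<\dots<i_m=n$.
\item When $c_n=0$, it writes down $t_0=e_n$ and $t_i=\lambda e_i$, with $\lambda$ avoiding the spectrum of the leading $(n-1)\times(n-1)$ block. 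Then $T$ is a scaled permutation matrix and $C-J_{0,n}T$ has a simple zero eigenvalue.
\end{itemize}

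You instead parametrize the affine solution set of the linear constraint and note that each desired property is the nonvanishing of a polynomial. You give one witness for each polynomial, then use that the polynomial ring is a domain and $\mathbb F$ is infinite. For $c_n\neq 0$, this shows both summands can be taken invertible at once, so Lemmas~\ref{lem:C-LI} and~\ref{lem:T-kth} become unnecessary. For $c_n=0$, you reach the same shape as the paper (one summand invertible, the other with $0$ as a simple eigenvalue), just non-constructively.

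The paper's route produces concrete matrices. Yours is shorter and avoids all index bookkeeping. It also shows that the admissible choices form a nonempty Zariski-open set. The same template adapts readily to the single-nilpotent-block setting of Proposition~\ref{prop:one-nil}, where the rows of $JZ$ other than the last nilpotent row are again arbitrary.
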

\begin{proof}
Write $C = (c_1, c_2, \dots, c_n)^\top$, where $c_i$ denotes the $i$-th row of $C$. Let $T = (t_0,t_1, t_2, \dots, t_{n-1})^\top \in M_n(\mathbb{F})$ given by the columns. By the definition of $J_{0,n}$, we have
$$
C - J_{0,n}T = (c_1 - t_1,\, c_2 - t_2,\, \dots,\, c_{n-1} - t_{n-1},\, c_n)^\top.
$$
We consider two cases.\\
\textbf{Case 1 (when $c_n \neq 0$):}
Let $\operatorname{rank}(C)=m$. If $m=n$, then $C$ is invertible and hence is $k$-th power of some matrix $X_1\in M_n(\mathbb F)$. Hence, we may assume $m\leq n-1$. After reordering the rows if necessary, we may assume that $ \{c_{n-m+1}, \ldots, c_n\} $
forms a basis for the row space of $C$. Then each of $c_1, \dots, c_{n-m}$ lies in their span. Set $t_{n-m+1} = \cdots = t_{n-1} = 0$.
Then,
$$
C - J_{0,n}T = (c_1 - t_1, \dots, c_{n-m} - t_{n-m}, c_{n-m+1}, \dots, c_n)^\top.
$$
By Lemma~\ref{lem:C-LI}, we now choose $t_1, \dots, t_{n-m}$ such that the set $\{t_1, \dots, t_{n-m}, c_{n-m+1}, \dots, c_n\}$ is linearly independent. It follows that $C - J_{0,n}T$ is invertible.

Let $t_0 = e_1 \in \mathbb F^n$. By construction, the matrix $T$ satisfies the hypotheses of Lemma~\ref{lem:T-kth}. Hence, there exists $X_2 \in M_n(\mathbb{F})$ such that $T = X_2^k$. Since $C - J_{0,n}T\in \mathrm{GL}_n(\mathbb F)$, it admits a $k$-th root. Thus, there exists $X_1 \in M_n(\mathbb{F})$ such that $C - J_{0,n}T = X_1^k$. Therefore, $C = X_1^k + J_{0,n} X_2^k$. 

\textbf{Case 2 (when $c_n = 0$):}
Write $C = \begin{pmatrix} C' & * \\ 0 & 0 \end{pmatrix}$, where $C' \in M_{n-1}(\mathbb{F})$. Choose $\lambda \in \mathbb{F}$ such that $\lambda \neq 0$ and $\lambda$ is not an eigenvalue of $C'$. Define $T \in M_n(\mathbb{F})$ by specifying its rows as
$$
t_0 = e_n, \quad t_i = \lambda e_i \quad \textit{for} \quad 1\leq i\leq n-1.
$$
Then, a direct computation shows that
$$
C - J_{0,n}T = \begin{pmatrix} C' - \lambda I_{n-1} & * \\ 0 & 0
\end{pmatrix}.
$$
Since $\lambda$ is not an eigenvalue of $C'$, the matrix $C' - \lambda I_{n-1}$ is invertible. It follows that $C - J_{0,n}T$ has exactly one zero eigenvalue and all other eigenvalues are nonzero. Hence, there exists $X_1 \in M_n(\mathbb{F})$ such that $C - J_{0,n}T = X_1^k$. On the other hand, the matrix $T$ is invertible, hence admits a $k$-th root. Thus, there exists a  $X_2 \in M_n(\mathbb{F})$ such that $T = X_2^k$. Consequently, $C = X_1^k + J_{0,n} X_2^k$. 
\end{proof}

\subsection{Jordan form with one nilpotent block}
In this section, we study the equation
$$C = X_1^k + J X_2^k$$
where $J$ is in Jordan canonical form with partition $(n_1, n_2, \dots, n_r)$ satisfying $\sum_{i=1}^r n_i = n$ with $r'(J)\geq 1$ and with exactly one Jordan block corresponding to the eigenvalue $0$, i.e., $r_0(J)=1$.

\begin{lemma}\label{lem:C'inv}
For every $C' \in M_{n-n_0}(\mathbb F)$, there exists $Y \in M_{n-n_0}(\mathbb F)$ such that
$$\det\big(C' - J'Y^k\big) \neq 0.$$
\end{lemma}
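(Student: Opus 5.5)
Here $J'$ is the direct sum of the Jordan blocks of $J$ with nonzero eigenvalues, so $J'$ is invertible of size $n-n_0$. The plan is to take $Y$ to be a scalar matrix. This reduces the question to choosing one scalar that avoids finitely many bad values, exactly as in the proof of Proposition~\ref{thm:invertibleB}.

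\textbf{Step 1: factor out $J'$.} Since $J'$ is invertible,
\[
\det\big(C'-J'Y^k\big)=\det(J')\,\det\big(J'^{-1}C'-Y^k\big).
\]
So it suffices to find $Y$ with $\det\big(J'^{-1}C'-Y^k\big)\neq 0$.

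\textbf{Step 2: choose $Y$ scalar.} Because $\mathbb F$ is algebraically closed, hence infinite, we can pick $\lambda\in\mathbb F$ that is not an eigenvalue of $J'^{-1}C'$; there are only finitely many eigenvalues to avoid. Let $\alpha\in\mathbb F$ satisfy $\alpha^k=\lambda$, which exists by algebraic closure, and set $Y=\alpha I_{n-n_0}$. Then $Y^k=\lambda I$, so $J'^{-1}C'-Y^k=J'^{-1}C'-\lambda I$ is invertible, and therefore $\det\big(C'-J'Y^k\big)\neq 0$.

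\textbf{Alternative viewpoint.} The map $t\mapsto \det\big(C'-t^kJ'\big)$ is a polynomial in $t$ with leading coefficient $(-1)^{n-n_0}\det(J')\,t^{k(n-n_0)}\neq 0$. A nonzero polynomial has only finitely many roots, so any $t$ outside them gives $Y=tI$.

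There is no genuine obstacle. The only point to check is that $J'$ is invertible, which holds because every eigenvalue of $J'$ is nonzero by definition of $J'$. The degenerate case $n=n_0$ (empty $J'$) is vacuous.
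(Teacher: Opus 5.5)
Your proof is correct and is essentially the paper's argument. The paper also takes $Y^k=\mu I_{n-n_0}$ and observes that $\det(C'-\mu J')$ is a polynomial in $\mu$ with leading coefficient $\pm\det(J')\neq 0$, which is your alternative viewpoint; your main route simply identifies its roots as the eigenvalues of $J'^{-1}C'$. The paper additionally takes $\mu\neq 0$, which is needed later in Proposition~\ref{prop:one-nil}, and your argument gives this too if you exclude $0$ along with the finitely many eigenvalues.
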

\begin{proof}
Let $Y^k=\mu I_{n-n_0}$ for some scalar $\mu\in \mathbb F$. Then  $$\det(C'-J'Y^k)=\det(C'-\mu J')$$ 
is a polynomial in $\mu$ of degree at most $n- n_0$. The leading coefficient of the polynomial is equal to the product of the nonzero eigenvalues of $J'$. Hence, one can choose $\mu\in \mathbb F^\times$ such that the determinant is nonzero. Since $\mathbb F$ is algebraically closed, every scalar admits a $k$-th root, so we may write $\mu = \lambda^k$. This completes the proof.
\end{proof}
 
\begin{proposition}\label{prop:one-nil}
Let $C \in M_n(\mathbb F)$, and let $J \in M_n(\mathbb F)$ be a Jordan matrix with partition $(n_1, n_2, \dots, n_r)$ satisfying $\sum_{i=1}^r n_i = n$, with $r(J)\geq 2$ and $r_0(J)=1$. Then there exist matrices $X_1, X_2 \in M_n(\mathbb F)$ such that
$$ C = X_1^k + J X_2^k. $$
\end{proposition}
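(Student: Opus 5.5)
We aim to produce $T=X_2^k$ and $D=X_1^k$ with $C=D+JT$ such that $T$ is invertible and $D$ either is invertible or has $0$ as a \emph{simple} eigenvalue. In either case both matrices admit $k$-th roots: an invertible matrix over $\mathbb F$ does, and a matrix with a simple zero eigenvalue is similar to $A\oplus 0_1$ with $A$ invertible, as in the proof of Lemma~\ref{lem:T-kth}. We use a genericity (Zariski-open) argument rather than an explicit construction.

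\textbf{Step 1: which matrices $JT$ can be.} After reordering the blocks, write $J=J_{0,n_0}\oplus J'$ with $J'\in \mathrm{GL}_{n-n_0}(\mathbb F)$; here $r(J)\ge 2$ and $r_0(J)=1$ guarantee that $J'$ is nonempty and invertible. Write $T=(t_1,\dots,t_n)^\top$ by rows.
\begin{itemize}
\item For $1\le i<n_0$, row $i$ of $JT$ is $t_{i+1}$.
\item Row $n_0$ of $JT$ is $0$.
\item The last $n-n_0$ rows of $JT$ are $J'$ applied to the block of rows $t_{n_0+1},\dots,t_n$.
\end{itemize}
Hence the assignment
\[
(t_1;\, D)\longmapsto T,\qquad t_{i+1}=c_i-d_i\ (i<n_0),\qquad (t_{n_0+1},\dots,t_n)^\top=J'^{-1}\bigl((c_{n_0+1},\dots,c_n)^\top-(d_{n_0+1},\dots,d_n)^\top\bigr),
\]
is an affine bijection between $\mathbb F^n\times\mathcal D$ and $M_n(\mathbb F)$. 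Here $\mathcal D=\{D\in M_n(\mathbb F): d_{n_0}=c_{n_0}\}$, which is an affine space. By construction $C=D+JT$ for every such pair.

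\textbf{Step 2: choosing the pair.} We work in the irreducible affine space $V=\mathbb F^n\times\mathcal D$, and consider two conditions on $(t_1,D)\in V$.
\begin{itemize}
\item $U_1=\{\det T\neq 0\}$. This is Zariski open, and it is nonempty because the parametrization is onto $M_n(\mathbb F)$.
\item $U_2$ depends on $c_{n_0}$:
\begin{itemize}
\item If $c_{n_0}\neq 0$, take $U_2=\{\det D\neq 0\}$. It is nonempty: extend $c_{n_0}$ to a basis and use those vectors as the other rows of $D$.
\item If $c_{n_0}=0$, row $n_0$ of $D$ vanishes. Expanding along that row gives $\chi_D(z)=z\,\chi_{D^\circ}(z)$, where $D^\circ$ is $D$ with row and column $n_0$ deleted. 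Take $U_2=\{\det D^\circ\ne 0\}$. This is open, and it is nonempty: put the identity in the $D^\circ$ positions. On $U_2$, the eigenvalue $0$ of $D$ is simple.
\end{itemize}
\end{itemize}
Since $V$ is irreducible and $\mathbb F$ is infinite, $U_1\cap U_2\neq\emptyset$. Picking a point in the intersection gives $T=X_2^k$ and $D=X_1^k$, and therefore $C=X_1^k+JX_2^k$.

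The main obstacle is Step 1: verifying carefully that every row of $T$ other than $t_1$ is determined affinely and bijectively by the free rows of $D$. This is where the invertibility of $J'$ and $r_0(J)=1$ are essential. With two or more nilpotent blocks, $JT$ would have several forced zero rows, and $D$ could be forced to have a larger nilpotent part with no $k$-th root, which is consistent with part (2) of Theorem~\ref{thm:General criteria for surjectivity}.

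An alternative that avoids the density argument is to mimic Proposition~\ref{thm:full size nilpotent} on the nilpotent block and use Lemma~\ref{lem:C'inv} on the $J'$ block. The generic argument above handles the off-diagonal blocks more cleanly, so we take it as the primary route.
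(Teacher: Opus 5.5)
Your proof is correct, but it takes a different route from the paper's.

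The paper orders the blocks as $J=J'\oplus J_{0,n_0}$ and takes $T$ block upper triangular, $T=\begin{pmatrix}\mu I_{n-n_0} & Q\\ 0 & T'\end{pmatrix}$:
\begin{itemize}
\item the choice $Q=(J')^{-1}C_{12}$ kills the upper-right block of $C-JT$;
\item $\mu$ is chosen via Lemma~\ref{lem:C'inv} so that $C_{11}-\mu J'$ is invertible;
\item the nilpotent diagonal block $C_{22}-J_{0,n_0}T'$ is handled by the explicit construction of Proposition~\ref{thm:full size nilpotent}, through Lemmas~\ref{lem:T-kth} and~\ref{lem:C-LI}.
\end{itemize}
The $k$-th roots of $T$ and $C-JT$ are then read off from their block-triangular shape.

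You instead observe that $T\mapsto (t_1,\,C-JT)$ is an affine isomorphism of $M_n(\mathbb F)$ onto $\mathbb F^n\times\mathcal D$. Hence $T$ and $D=C-JT$ can be made generic at the same time. The two properties you need are then nonempty Zariski-open conditions: $T$ invertible, and $D$ invertible or with a simple zero eigenvalue.

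What your route gives:
\begin{itemize}
\item It is shorter and uniform. It uses only the invertibility of $J'$, so it works verbatim when $J'$ is empty. It therefore also reproves Proposition~\ref{thm:full size nilpotent}, without Lemmas~\ref{lem:T-kth} and~\ref{lem:C-LI} and without the row-reordering step in that proof's Case~1.
\item It makes $T$ invertible. You never have to check that a block-triangular $T$ with a possibly singular diagonal block $T'$ is still a $k$-th power, which the paper asserts without comment.
\item It shows exactly where $r_0(J)=1$ enters: only through the single forced row $d_{n_0}=c_{n_0}$ of $D$.
\end{itemize}

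What the paper's route gives in exchange is an explicit, non-generic choice of $T$ (and hence of $X_2$ up to taking a $k$-th root), rather than a pure existence statement.
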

\begin{proof} Let $T' \in M_{n_0}(\mathbb F)$ and define
$$ T=\begin{pmatrix} \mu I_{n-n_0} & Q \\ 0 & T' \end{pmatrix},$$
where $\mu \in \mathbb F^\times$ and $Q \in M_{(n-n_0)\times n_0}(\mathbb F)$. Partition the matrix $C$ as 
\begin{equation*}
    C=\begin{pmatrix}
   (C_{11})_{n-n_0} & (C_{12})_{(n-n_0)\times n_0}\\
   (C_{21})_{n_0\times (n-n_0)} & (C_{22})_{n_0}
\end{pmatrix}.
\end{equation*} 
Then, 
\begin{equation*}
C-JT = \begin{pmatrix} C_{11}-\mu J' & C_{12}-J'Q  \\ C_{21} & C_{22}-J_{0,n_0}T' \end{pmatrix}. \end{equation*}
Since $J'$ is invertible, choose $Q=(J')^{-1}C_{12}$ to obtain \begin{equation*}
C-JT=\begin{pmatrix} C_{11}-\mu J' & 0  \\ C_{21} & C_{22}-J_{0,n_0}T' \end{pmatrix}.
\end{equation*} 
By Lemma~\ref{lem:C'inv}, we can choose $\mu$ such that  $C_{11}-\mu J'$ is invertible. By Lemma~\ref{lem:C-LI}, $C_{22}-J_{0,n_0}T'$ has at most one zero eigenvalue. Therefore, the matrix $C-JT$ has all eigenvalues nonzero except possibly a single zero eigenvalue, and this occurs in at most one Jordan block. Over an algebraically closed field, such a matrix admits a $k$-th root, say $X_1\in M_n(\mathbb F)$. Moreover, by Lemma~\ref{lem:T-kth}, we may choose $T' = Y^k$ for some $Y \in M_{n_0}(\mathbb F)$, and since $\mu=\lambda^k$ for some $\lambda \in \mathbb F^\times$, it follows that $T=X_2^k$ for some $X_2 \in M_n(\mathbb F)$. Therefore,
$$
C = X_1^k + JX_2^k,
$$
as required.
\end{proof} 
Using the constructions from above, we obtain the following proposition for the class of matrices having $r_0(J)\leq 1$.
\begin{proposition}
\label{prop:main_{r_0}0_1}
Let $k \geq 2$ be an integer, and $J \in M_n(\mathbb F)$ has at most one nilpotent Jordan block, i.e., $r_0(J) \le 1$. Then the map $$\widetilde{\omega}(x_1,x_2)=x_1^k+Jx_2^k$$ is surjective on $M_n(\mathbb F)$.
\end{proposition}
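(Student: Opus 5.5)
The plan is a case split on the number of Jordan blocks of $J$, using that $r_0(J)\le 1$. Each case is settled by one of the results already proved in this section, so the proposition follows by putting them together.

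\textbf{Case $r_0(J)=0$.} Here $J$ has no Jordan block for the eigenvalue $0$. By Lemma~\ref{lem:nullity-zero-blocks}, $J$ is then invertible. Proposition~\ref{thm:invertibleB} with $B=J$ gives surjectivity of $\widetilde{\omega}$ directly.

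\textbf{Case $r_0(J)=1$.} Exactly one Jordan block of $J$ is nilpotent, and I split by the total number of blocks $r(J)$.
\begin{itemize}
\item If $r(J)=1$, that nilpotent block is all of $J$, so $J=J_{0,n}$. Proposition~\ref{thm:full size nilpotent} says $C=x_1^k+J_{0,n}x_2^k$ is solvable for every $C\in M_n(\mathbb F)$ and every $k\ge 2$. This is exactly surjectivity.
\item If $r(J)\ge 2$, then $r'(J)\ge 1$, and we are precisely in the setting of Proposition~\ref{prop:one-nil}. That proposition produces $X_1,X_2$ with $C=X_1^k+JX_2^k$ for each $C$.
\end{itemize}

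One bookkeeping point: Proposition~\ref{prop:one-nil} implicitly arranges the blocks so that $J=J'\oplus J_{0,n_0}$. I would note that permuting the Jordan blocks is a conjugation by a permutation matrix. By the conjugation reduction in Section~\ref{sec: Preliminaries}, this does not affect surjectivity, so there is no loss of generality.

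There is no real obstacle here; the proposition is an assembly of earlier results. The only care needed is checking that the three cases ($J$ invertible, $J=J_{0,n}$, and $J=J'\oplus J_{0,n_0}$ with $J'$ invertible and nonempty) exhaust all Jordan matrices with $r_0(J)\le 1$, which they do.
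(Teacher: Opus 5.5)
Your proof is correct and follows the paper's argument: the paper likewise settles $r_0(J)=0$ with Proposition~\ref{thm:invertibleB} and $r_0(J)=1$ with Propositions~\ref{thm:full size nilpotent} and~\ref{prop:one-nil}. You additionally make explicit the split $r(J)=1$ versus $r(J)\ge 2$, and the harmless reordering of Jordan blocks by a permutation conjugation; the paper leaves both implicit.
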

\begin{proof}
If $r_0(J)=0$, then $J$ is invertible, and the result follows from Proposition~\ref{thm:invertibleB}. If $r_0(J)=1$, then $J$ has exactly one nilpotent Jordan block, and the result follows from Proposition~\ref{thm:full size nilpotent} and Proposition~\ref{prop:one-nil}.
\end{proof}

\section{Jordan forms with multiple nilpotent blocks}
\label{sec: more than one nil block}
Let $J \in M_n(\mathbb{F})$ be in Jordan canonical form,
$$
J = \left(\bigoplus_{i=1}^{r'} J_{\lambda_i,n_i}\right)\oplus \left(\bigoplus_{s=1}^{r_0} J_{0,m_s}\right),
$$
where $\lambda_i\neq 0$ and $\sum_{s=1}^{r_0} m_s = n_0$. For $1\le s\le r_0$, set 
$$
\ell_s := n-n_0+m_1+\cdots+m_s,
$$
and define $\sigma:\{1,\dots,n\}\to\{1,\dots,n\}$ by
$$
\sigma(i)=
\begin{cases}
i, & 1\le i\le n-n_0,\\
i-\#\{\,s:\ell_s<i\,\}, & n-n_0<i\le n,\ i\notin\{\ell_1,\dots,\ell_{r_0}\},\\
n-r_0+s, & i=\ell_s
\end{cases}
$$
associated to the $J$ and $P$ be the permutation matrix corresponding to $\sigma$. Consider the partition of $C$ obtained under the conjugation of $P$:
$$
 P^{-1}CP =
\begin{pmatrix}
C_{11}^P & C_{12}^P\\
C_{21}^P & C_{22}^P
\end{pmatrix}.
$$
where $C_{11}^P\in M_{n-r_0}(\mathbb F)$ and $C_{22}^P\in M_{r_0}(\mathbb F)$. 
Let $W\subseteq \mathbb F^n$ be the subspace spanned by the last $r_0$ rows of $C^P$, and set $m:=\dim(W)$.

For $T=(t_1,\dots,t_n)^\top$, the $i$-th row of $JT$ is
$$
(JT)_i = \sum_{j=1}^n J_{ij} t_j,
$$
so that, for each Jordan block, $(JT)_i = \lambda t_i + t_{i+1}$ if $i$ is not the last index of the block, and $(JT)_i = \lambda t_i$ if $i$ is the last index of the block.

\subsection{Necessary condition for surjectivity}
\label{subsec: necessary condition}
We begin by recalling a result of Miller~\cite{Miller2016} on powers of nilpotent Jordan blocks.

\begin{lemma}[Miller]
\label{lem:Miller}
Let $n > k \ge 2$. Then the $k$-th power of the nilpotent Jordan block $J_{0,n}$ is conjugate to
$$
\left(\bigoplus_{k-m} J_{0,\lfloor n/k \rfloor}\right)
\oplus
\left(\bigoplus_{m} J_{0,\lceil n/k \rceil}\right),
$$
where $m \equiv n \pmod{k}$ and $0 \le m \le k-1$.
\end{lemma}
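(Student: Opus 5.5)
We prove the statement by an explicit change of basis: the standard basis of $\mathbb F^n$ splits into $k$ chains according to residues modulo $k$, and $J_{0,n}^k$ acts on each chain as a single nilpotent Jordan block.

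Let $N=J_{0,n}$ act on column vectors, so that $Ne_1=0$ and $Ne_i=e_{i-1}$ for $2\le i\le n$. Then $N^k e_i=e_{i-k}$ when $i>k$, and $N^k e_i=0$ when $i\le k$. For each residue $1\le \rho\le k$ put
$$
\mathcal C_\rho=\{\rho,\ \rho+k,\ \rho+2k,\ \dots\}\cap\{1,\dots,n\}.
$$
The hypothesis $n>k$ guarantees that every $\mathcal C_\rho$ is nonempty. The sets $\mathcal C_1,\dots,\mathcal C_k$ partition $\{1,\dots,n\}$.

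The first step is to observe that $V_\rho=\mathrm{span}\{e_i : i\in\mathcal C_\rho\}$ is $N^k$-invariant. If we order its basis increasingly as $e_\rho, e_{\rho+k},\dots$, then $N^k$ sends each basis vector to the previous one and kills $e_\rho$. Hence $N^k|_{V_\rho}$ is exactly the nilpotent Jordan block $J_{0,|\mathcal C_\rho|}$ in this basis.

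Since $\mathbb F^n=\bigoplus_\rho V_\rho$, we may reorder the standard basis by listing the chains one after another. This reordering is a permutation matrix $P$, and it conjugates $N^k$ to $\bigoplus_{\rho=1}^k J_{0,|\mathcal C_\rho|}$.

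The remaining step is to count the chain lengths. Write $n=qk+m$ with $q=\lfloor n/k\rfloor\ge 1$ and $0\le m\le k-1$. For $\rho\le m$ the chain $\mathcal C_\rho$ consists of $\rho,\rho+k,\dots,\rho+qk$, so it has $q+1$ elements. For $\rho>m$ it stops at $\rho+(q-1)k$, so it has $q$ elements. Thus there are $m$ blocks of size $q+1$ and $k-m$ blocks of size $q$.

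If $m>0$, then $q+1=\lceil n/k\rceil$, which gives the stated decomposition. If $m=0$, there are no blocks of size $\lceil n/k\rceil$, and all $k$ blocks have size $q=\lfloor n/k\rfloor=\lceil n/k\rceil$, so the formula again holds.

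The only point needing care is this bookkeeping at the ends of the chains, namely checking that $\rho+qk\le n$ holds exactly when $\rho\le m$. This follows immediately from $n=qk+m$. As a cross-check, the same answer comes from the rank sequence $\operatorname{rank}(N^{kj})=\max(n-kj,0)$: the number of Jordan blocks of $N^k$ of size at least $j$ equals $\operatorname{rank}(N^{k(j-1)})-\operatorname{rank}(N^{kj})$. This quantity is $k$ for $j\le q$, $m$ for $j=q+1$, and $0$ beyond.
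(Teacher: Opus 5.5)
Your proof is correct and complete. The paper does not prove this lemma at all: it cites Miller and uses the result as a black box, so there is no argument in the text to follow. Your route is the standard self-contained one.

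\begin{itemize}
\item You split the standard basis into the $k$ residue chains $\mathcal C_\rho$, on each of which $J_{0,n}^k$ acts as a single shift.
\item You then read off $|\mathcal C_\rho|\in\{q,q+1\}$ from $n=qk+m$.
\end{itemize}

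Compared with the citation, this makes a few things explicit. The conjugating matrix is a permutation matrix, so the statement holds over any field, with no use of algebraic closure or characteristic $0$. The rank sequence $\operatorname{rank}(N^{kj})=\max(n-kj,0)$ gives an independent, basis-free check of the block sizes.

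Two small remarks:
\begin{itemize}
\item Your blocks come out in the order size $q+1$ first, which differs from the order in the statement. This is harmless, since permuting direct summands is just another permutation conjugation.
\item If you discard empty chains, the same argument also covers $n\le k$. There $J_{0,n}^k=0$ consists of $n$ blocks of size $1=\lceil n/k\rceil$. This case falls outside the lemma's hypothesis $n>k$. It is nonetheless what Proposition~\ref{prop:nonsurjectivity_condition} tacitly needs whenever a nilpotent block of $X_1$ has size $p\le k$ and the paper bounds the largest zero-block of $X_1^k$ by $\lceil p/k\rceil$.
\end{itemize}
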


\begin{proposition}
\label{prop:nonsurjectivity_condition}
Let $k \ge 2$, and let $J \in M_n(\mathbb F)$ satisfy $r_0(J)=r_0 \ge 2$. If $n \le k(r_0 - 1)$ then the map
$\omega(x_1,x_2)=x_1^k + Jx_2^k $
is not surjective on $M_n(\mathbb F)$.
\end{proposition}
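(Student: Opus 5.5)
The plan is to find one explicit $C$ outside the image. The idea is to use the left kernel of $J$ to freeze part of $X_1^k$, and then compare nilpotency indices using Lemma~\ref{lem:Miller}.

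\textbf{Step 1 (left kernel).} Let $L=\{v\in\mathbb F^n \text{ (row vectors)} : vJ=0\}$. By Lemma~\ref{lem:nullity-zero-blocks}, $\dim L=r_0$. Concretely, $L$ is spanned by the row vectors $e_{a_s}$, where $a_s$ is the index of the first row of the $s$-th nilpotent Jordan block. Indeed, column $a_s$ of $J$ is zero, so row $a_s$ of $J^\top$ is zero. If $C=X_1^k+JX_2^k$, then for every $v\in L$ we get
$$vX_1^k=vC.$$
So the rows of $Y:=X_1^k$ indexed by $a_1,\dots,a_{r_0}$ are forced to equal the corresponding rows of $C$, whatever $X_2$ is.

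\textbf{Step 2 (choice of $C$).} Choose $C$ with rows
$$c_{a_s}=e_{a_{s+1}} \quad (1\le s\le r_0-1), \qquad c_{a_{r_0}}=0,$$
and all other rows arbitrary, say zero. Then any admissible $Y$ satisfies $e_{a_s}Y=e_{a_{s+1}}$ and $e_{a_{r_0}}Y=0$. Hence $L$ is invariant under right multiplication by $Y$, and $Y$ acts on $L$ as a single nilpotent Jordan block of size $r_0$. Since $e_{a_1}Y^{r_0-1}=e_{a_{r_0}}\neq 0$, the vector $e_{a_1}$ lies in the generalized $0$-eigenspace of $Y$ (acting on rows) and is not killed by $Y^{r_0-1}$. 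Therefore the nilpotent part of $Y$ has a Jordan block of size at least $r_0$, i.e. its nilpotency index is at least $r_0$.

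\textbf{Step 3 (index bound for $k$-th powers).} Suppose $Y=X_1^k$. Decompose $\mathbb F^n$ into the generalized $0$-eigenspace of $X_1$ (of dimension $n'\le n$) and its complement. The generalized $0$-eigenspace of $X_1^k$ is the same subspace, and on it $X_1^k$ is the $k$-th power of the nilpotent part $N$ of $X_1$. By Lemma~\ref{lem:Miller}, applied blockwise to each Jordan block $J_{0,p}$ of $N$ (blocks with $p\le k$ simply become $0$), the largest Jordan block of $N^k$ has size at most $\lceil p/k\rceil\le\lceil n/k\rceil$. The hypothesis $n\le k(r_0-1)$ gives $\lceil n/k\rceil\le r_0-1$. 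So every nilpotent Jordan block of $X_1^k$ has size at most $r_0-1$, contradicting Step 2. Hence $C\notin\operatorname{Im}\omega$.

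\textbf{Main obstacle.} The delicate step is Step 2: translating the forced rows into a lower bound on the size of a nilpotent Jordan block of $Y$. The point to check carefully is that invariance of $L$ under right multiplication, with $Y|_L\cong J_{0,r_0}$, really yields a Jordan block of size $\ge r_0$ for the eigenvalue $0$ of $Y$. This follows because $Y^{r_0-1}\neq 0$ on the generalized $0$-eigenspace, and the nilpotency index of $Y$ restricted to that subspace equals the largest $0$-block size. The remaining work is bookkeeping: identifying the indices $a_s$ from the block structure of $J$, and handling the case where the nonzero-eigenvalue part $J'$ is present, which does not affect $L$.
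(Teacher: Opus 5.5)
Your strategy is the paper's. You force the rows of $X_1^k$ that lie in the left kernel of $J$ to equal the corresponding rows of $C$, choose those rows of $C$ to act as a single block $J_{0,r_0}$, and then use Lemma~\ref{lem:Miller} to show that every $0$-Jordan block of a $k$-th power has size at most $\lceil n/k\rceil\le r_0-1$. Steps~2 and~3 are sound, and they spell out the invariant-subspace and generalized-eigenspace details that the paper leaves implicit. However, Step~1 misidentifies $L$, and as a result the witness $C$ you write down need not lie outside the image.

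In the paper's convention, $J_{i,i+1}=1$ when $i$ is not the last index of a block. The vector $e_aJ$ is the $a$-th \emph{row} of $J$, so $e_a\in L$ exactly when row $a$ of $J$ vanishes. The zero rows are the \emph{last} rows of the nilpotent blocks. Your justification (column $a_s$ of $J$ is zero) describes the right kernel $\{x: Jx=0\}$, not $L$. For a nilpotent block of size $\ge 2$ with first index $a$, one has $e_aJ=e_{a+1}\neq 0$.

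Here is a concrete failure. Take $n=k=3$ and $J=J_{0,2}\oplus J_{0,1}$, so $r_0=2$ and $n\le k(r_0-1)$. Your recipe gives $a_1=1$, $a_2=3$, and $C=E_{13}$, the matrix unit. Let $T$ be the permutation matrix with rows $e_1,e_3,e_2$. Then $JT=E_{13}$ and $T^3=T$, so $C=0^3+JT^3$ lies in the image.

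The fix is immediate. Let $a_s$ be the last index of the $s$-th nilpotent block. Alternatively, take any basis $v_1,\dots,v_{r_0}$ of $L$ and choose $C$ with $v_sC=v_{s+1}$ and $v_{r_0}C=0$. With that correction, your $C$ coincides, up to the arbitrary remaining rows, with the paper's choice $C^P_{21}=0$, $C^P_{22}=J_{0,r_0}$. The rest of your argument then goes through unchanged.
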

\begin{proof}
Choose $C \in M_n(\mathbb F)$ such that ${C}_{21}^P=0$ and ${C}_{22}^P = J_{0,r_0}$. Then $P^{-1}(C-JT)P$ has a Jordan block of size $r_0$ corresponding to the eigenvalue $0$. Hence its minimal polynomial is of the form $f(z)\,z^\ell$, where $f(0)\neq 0$ and $\ell \ge r_0$.

Suppose, for contradiction, that $P^{-1}(C-JT)P = X_1^k$ for some $X_1 \in M_n(\mathbb F)$. Let $p$ be the size of the Jordan block of $X_1$ corresponding to the eigenvalue $0$. By Lemma~\ref{lem:Miller}, the largest Jordan block of $X_1^k$ corresponding to $0$ has size at most $\left\lceil \frac{p}{k} \right\rceil$. Therefore,
$$
\ell \le \left\lceil \frac{p}{k} \right\rceil \le \left\lceil \frac{n}{k} \right\rceil.
$$
Combining this with $\ell \ge r_0$, we obtain $r_0 \le \left\lceil \frac{n}{k} \right\rceil$, which implies
$$
n \ge k(r_0 - 1) + 1.
$$
This contradicts the assumption $n \le k(r_0 - 1)$. Hence, no such $X_1$ exists, and $\omega$ is not surjective.
\end{proof}

\begin{remark}
Restating the above proposition, we obtain a necessary condition for the surjectivity of the map $\omega(x_1,x_2)=x_1^k+Jx_2^k$ is $r_0 < \frac{n}{k} + 1$. 
\end{remark}

\subsection{Proof of Theorem~\ref{thm:General criteria for surjectivity}}
\label{proof: thm 1.2}
For $\omega$ to be surjective, it is necessary and sufficient that $\widetilde{\omega}$ is surjective. Since $A_1$ is invertible, the matrices $A_1^{-1}A_2$ and $A_2$ have the same kernel, and hence the same nullity, by Lemma~\ref{lem:nullity-zero-blocks}. In particular, we have $r_0(A_1^{-1}A_2)=r_0(A_2)$.  Therefore, if $r_0(A_2)\leq 1$, then $r_0(A_1^{-1}A_2)\leq 1$, and the surjectivity  of $\widetilde{\omega}$ follows from Proposition~\ref{prop:main_{r_0}0_1}. This proves part~(1). Part~(2) follows directly from Proposition~\ref{prop:nonsurjectivity_condition}.
\subsection{Rank criterion for the elements in the image}\label{subsec: rank criteria}
In this subsection, we use the notation introduced at the beginning of this section.
\begin{lemma}
\label{lem: nonzero minor}
Let $C \in M_n(\mathbb F)$, and $P$ be the permutation matrix associated to the Jordan form $J$, and let 
$l_1,\dots,l_{r_0}$ be the indices corresponding to the coordinates of the nilpotent part (i.e., the Jordan blocks associated to the eigenvalue $0$) in this decomposition.
Define $ W = \mathrm{span}\{c_{l_1}, \dots, c_{l_{r_0}}\} \subset \mathbb F^n$ and $ m = \dim W$.
Assume that $m = \operatorname{rank}(C_{21}^P)$.
Then there exists $T \in M_n(\mathbb F)$ such that
$$
a_{r_0-m}
=
\sum_{\substack{S \subseteq \{1,\dots,n\} \\ |S| = n-(r_0-m)}}
\det(\widetilde{C}_S) \neq 0,
$$
where $\widetilde{C} = P^{-1}(C-JT)P$, $\widetilde{C}_S$ denotes the principal sub matrices indexed by $S$, and $a_k$ denotes the coefficient of $z^k$ in $\chi_{\widetilde C}(z)$.
\end{lemma}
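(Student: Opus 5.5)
The plan is to work entirely with $\widetilde C=P^{-1}(C-JT)P$. First I identify which entries of $\widetilde C$ we control and which are fixed; then I reduce the nonvanishing of $a_{r_0-m}$ to a single explicit principal minor, using a one-parameter scaling.

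\textbf{Step 1 (structure of $\widetilde C$).} By the row formula for $JT$ recorded at the start of this section, the row of $JT$ indexed by the last position $\ell_s$ of a nilpotent block is $0\cdot t_{\ell_s}=0$. Every other row is $\lambda t_i+t_{i+1}$ (nonzero block) or $t_{i+1}$ (nilpotent block, not last position). I claim the map $T\mapsto (\text{rows of }JT\text{ not indexed by the }\ell_s)$ is onto $M_{(n-r_0)\times n}(\mathbb F)$. For the nilpotent rows this is clear, since $t_{i+1}$ is a free row. For a block with $\lambda\neq 0$, solve $\lambda t_i+t_{i+1}=v_i$ from the bottom of the block upward, where the last row is $\lambda t_i=v_i$. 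These choices involve disjoint sets of rows of $T$, so they are compatible. Since $\sigma$ sends the $\ell_s$ to the last $r_0$ positions, we get
$$\widetilde C=\begin{pmatrix} X & Y\\ C_{21}^P & C_{22}^P\end{pmatrix}.$$
Here $X\in M_{n-r_0}(\mathbb F)$ and $Y\in M_{(n-r_0)\times r_0}(\mathbb F)$ are completely arbitrary, and the bottom block row is fixed. It suffices to exhibit one pair $(X,Y)$ with $a_{r_0-m}\neq 0$.

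\textbf{Step 2 (a good specialization).} Since $\operatorname{rank}(C_{21}^P)=m$, choose row indices $R\subseteq\{n-r_0+1,\dots,n\}$ and column indices $K\subseteq\{1,\dots,n-r_0\}$ with $|R|=|K|=m$ and $\det (C_{21}^P)_{R,K}\neq 0$. Define $(X_0,Y_0)$ as follows:
\begin{itemize}
\item $X_0$ is the diagonal $0/1$ matrix with $1$ exactly on the diagonal positions outside $K$;
\item $Y_0$ has, in its columns indexed by $R$, the unit vectors $e_k$ ($k\in K$) in some bijective order, and zeros elsewhere.
\end{itemize}
Let $S_0=\{1,\dots,n-r_0\}\cup R'$ with $|R'|=m$. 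Using the identity pivots in the rows outside $K$, the principal minor $\det\widetilde C_{S_0}$ reduces, up to sign, to $\det\begin{pmatrix}0 & (Y_0)_{K,R'}\\ (C^P_{21})_{R',K} & *\end{pmatrix}$.
\begin{itemize}
\item If $R'\neq R$, then $(Y_0)_{K,R'}$ has a zero column, so the rows indexed by $K$ have rank $<m$ and the minor vanishes.
\item If $R'=R$, the minor equals $\pm\det(C_{21}^P)_{R,K}\neq 0$.
\end{itemize}
Hence $\sum_{S\supseteq\{1,\dots,n-r_0\},\,|S|=n-r_0+m}\det\widetilde C_S\neq 0$ at $(X_0,Y_0)$.

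\textbf{Step 3 (scaling to kill the other terms).} The remaining principal minors of size $n-(r_0-m)$ are indexed by $S$ that omit at least one of the first $n-r_0$ indices. Replace $(X,Y)$ by $(sX_0,sY_0)$ with $s\in\mathbb F$. A minor $\det\widetilde C_S$ is then a polynomial in $s$ of degree at most $|S\cap\{1,\dots,n-r_0\}|$, since each of its terms uses exactly one entry from each row of $S$. The minors with $S\supseteq\{1,\dots,n-r_0\}$ have degree exactly $n-r_0$ with nonzero total leading coefficient by Step 2. All other $S$ have degree $<n-r_0$. Therefore $a_{r_0-m}(s)$, which equals the displayed sum up to the sign $(-1)^{n-r_0+m}$, is a nonzero polynomial in $s$. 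Since $\mathbb F$ is infinite, some $s$ makes it nonzero, and Step 1 supplies the corresponding $T$.

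I expect the main obstacle to be Step 1: carefully checking, with the indexing via $\sigma$ and $P$, that the non-$\ell_s$ rows of $JT$ really fill out an arbitrary matrix, including the right-to-left recursion inside blocks with nonzero eigenvalue. Once that is in place, Steps 2 and 3 are a routine minor computation.
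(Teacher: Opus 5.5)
Your proof is correct, and its skeleton is the paper's: the top $n-r_0$ rows of $\widetilde C$ can be prescribed freely through $T$, only $S\supseteq\{1,\dots,n-r_0\}$ matter, and the top rows are designed so that exactly one such $S$ contributes. The paper uses the free prescribability implicitly in its row-by-row recursion for the $t_i$; your Step 1 makes it explicit.

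The real difference is how you dispose of the $S$ that omit an index of $\{1,\dots,n-r_0\}$. The paper shows these minors vanish for every $T$. Such an $S$ contains at least $m+1$ of the last $r_0$ indices, and those rows of $\widetilde C$ are the fixed rows $c_{\ell_s}P$, which span an $m$-dimensional space, so the minor is $0$. This is exactly where $\dim W=m$ enters. Your leading-coefficient-in-$s$ argument avoids this count, since it only uses a nonzero $m\times m$ minor of $C_{21}^P$. Under the lemma's hypothesis, however, it is redundant: already at $s=1$ your $(X_0,Y_0)$ gives $a_{r_0-m}=\pm\det(C^P_{21})_{R,K}$.

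Your explicit $0/1$ choice is also cleaner than the paper's rows $\widetilde c_i=\widetilde c_{j_i}+\eta_{j_i}e_{j_i}$. The paper justifies the vanishing for $\mathcal J\neq\mathcal J'$ by saying $\{\widetilde c_j\}_{j\in\mathcal J}$ is not a basis of $W$. That can fail, since another $m$-subset of the bottom rows may also be a basis. The vanishing really comes from row $i$ restricting to $\widetilde c_{j_i}$ on $S$ when $j_i\notin S$. Your zero-column argument for $R'\neq R$ has no such gap.

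Finally, the paper's proof also asserts that $T$ can be taken invertible, which the next Proposition needs in order to write $T=X_2^k$. This is not part of the statement, but your setup gives it cheaply:
\begin{itemize}
\item the $r_0$ rows of $T$ that are first rows of nilpotent blocks are unconstrained;
\item the other $n-r_0$ rows have the form $A-sB$ with $B$ of full row rank, because the rows of $(X_0\ Y_0)$ are distinct unit vectors transported by the invertible blockwise map of Step 1;
\item so for all but finitely many $s$ these rows are independent and extend to a basis.
\end{itemize}
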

\begin{proof}
The rows of $\widetilde{C}$ are given by 
$$\widetilde{c}_i=\begin{cases}
    (c_i-J_{ii}t_i-J_{i(i+1)}t_{i+1})P, &  1\leq i\leq n-n_0\\
    (c_{\sigma^{-1}(i)}-t_{\sigma^{-1}(i)})P, &  n-n_0+1\leq i\leq n-r_0\\
    c_{\sigma^{-1}(i)}P, &   n-r_0+1\leq i\leq n
\end{cases}$$
Let 
$
\mathcal I=\{S\subset\{1,\dots,n\}: |S|=n-(r_0-m)\}
$ and $\mathcal I_0=\{n-r_0+1,\dots,n\}$.
Partition $\mathcal I$ as
$$
\mathcal I_1=\{S\in \mathcal I:|S\cap \mathcal I_0|\le m\},\qquad
\mathcal I_2=\{S\in \mathcal I:|S\cap \mathcal I_0|>m\}.
$$
Since the rows of $\widetilde C$ indexed by $\mathcal I_0$ span $W$ with $\dim W=m$, any set of more than $m$ such rows is linearly dependent and hence $\det(\widetilde C_S)=0$ for all $S\in \mathcal I_2$. Thus
$$
a_{r_0-m}=\sum_{S\in \mathcal I}\det(\widetilde C_S)=\sum_{S\in \mathcal I_1}\det(\widetilde C_S).
$$
Now $|S|=n-r_0+m$ and $|\mathcal I_0|=r_0$, so for $S\in \mathcal I_1$ we have $|S\cap \mathcal I_0|\le m$, hence
$$
|S\cap\{1,\dots,n-r_0\}|\ge (n-r_0+m)-m=n-r_0,
$$
which forces $\{1,\dots,n-r_0\}\subseteq S$. Therefore each $S\in \mathcal I_1$ is of the form
$$
S=\{1,\dots,n-r_0\}\cup \mathcal J,\qquad \mathcal J\subseteq \mathcal I_0,\ |\mathcal J|=m.
$$
We now construct $T$ so that exactly one such set contributes a nonzero determinant.  

Let $\pi_1:\mathbb F^n\to\mathbb F^{n-r_0}$ and $\pi_1':\mathbb F^n\to \mathbb F^{r_0}$  given by 
\begin{align*}
   \pi_1(x_1,\dots,x_n)& =(x_1,\dots,x_{n-r_0})\\
   \pi_1'(x_1,\dots,x_n)& =(x_{n-r_0+1},\dots,x_{n})
\end{align*}
Since $\dim W=m=\operatorname{rank}(\widetilde{C}_{21}^P)$, we can choose indices $\mathcal J'=\{j_1,\dots,j_m\}\subseteq \mathcal I_0$ such that $$\{\pi_1(\widetilde c_{j_1}),\dots,\pi_1(\widetilde c_{j_m})\}$$ is linearly independent set, and set
$$
S'=\{1,\dots,n-r_0,j_1,\dots,j_m\}.
$$
First choose $\pi_1(t_{m+1}),\dots,\pi_1(t_{n-r_0})$(in reverse order) so that
$$
\{\pi_1(\widetilde c_{m+1}),\dots,\pi_1(\widetilde c_{n-r_0}),\pi_1(\widetilde c_{j_1}),\dots,\pi_1(\widetilde c_{j_m})\}
$$
forms a basis of $\mathbb F^{n-r_0}$, and $\{\pi_1(t_{m+1}),\dots,\pi_1(t_{n-r_0})\}$ is linearly independent. This can be achieved by a similar construction as in Lemma~\ref{lem:C-LI} for size $n-r_0$ matrix. Set $\pi_1'(t_i)=0$ for $m+1\leq i\leq n-r_0 $.

If $m\leq n-n_0$, since $J_{ii}\neq 0$, choose $t_i$(in reverse order) such that $J_{ii}t_iP=c_iP-\widetilde{c}_{j_i}-J_{i(i+1)}t_{i+1}P-\eta_{j_i}e_{j_i}$.
If $n-n_0<m$, then 
for $n-n_0+1\leq i\leq m$, choose $t_{\sigma^{-1}(i)}P=c_{\sigma^{-1}(i)}P-\widetilde{c}_{j_i}-\eta_{j_i}e_{j_i}$ where $\eta_{j_i}\in\mathbb F$. For $1\leq i\leq n-n_0<m$, since $J_{ii}\neq 0$, choose $t_i$(in reverse order) such that $J_{ii}t_iP=c_iP-\widetilde{c}_{j_i}-J_{i(i+1)}t_{i+1}P-\eta_{j_i}e_{j_i}$.
Then for $1\leq i\leq n-r_0$ and $i\leq m$, 
$\widetilde{c_i}=
    \widetilde{c}_{j_i}+\eta_{j_i}e_{j_i}$.

Define $\pi_2:\mathbb F^n\longrightarrow \mathbb F^{n-m}$ and $\pi_2':\mathbb F^n\longrightarrow \mathbb F^m$ given by \begin{align*}
\pi_2(x_1,x_2,\dots,x_n)&=(x_1,x_2,\dots, x_{n-r_0},x_{i_1},x_{i_2},\dots,x_{i_{r_0-m}})\\
\pi_2'(x_1,x_2,\dots,x_n)&=(x_{j_1},x_{j_2},\dots,x_{j_{m}})
\end{align*}
where $\{i_1,i_2,\dots,i_{r_0-m}\}=\mathcal I_0\setminus \mathcal J' $.

For $1\le i\le m$, 
\begin{align*}
    \pi_2(\widetilde c_i)=\pi_2(\widetilde c_{j_i}), \qquad
    \pi'_2(\widetilde c_i)=\pi'_2(\widetilde c_{j_i})+\eta_{j_i}e_{j_i}
\end{align*}
We choose $\eta_{j_i}\in\mathbb F^\times$ such that the set $\{\pi_2'(t_1),\pi_2'(t_2),\dots,\pi_2'(t_m)\}$ is a linearly independent set in $\mathbb F^m$. Since $\eta_{j_i}\neq 0$, we get $\widetilde c_i \notin \mathrm{span}\{\widetilde c_{j_1},\dots,\widetilde c_{j_m}\}$ for $1\leq i\leq m$.
We now prove that the rows indexed by $S'$ are linearly independent.
By construction,
$$
\pi_1(\widetilde c_i)=\pi_1(\widetilde c_{j_i}) \quad \text{for } 1\le i\le m,
$$
and the set
$$
\mathcal B:=\{\pi_1(\widetilde c_{m+1}),\dots,\pi_1(\widetilde c_{n-r_0}),
\pi_1(\widetilde c_{j_1}),\dots,\pi_1(\widetilde c_{j_m})\}
$$
forms a basis of $\mathbb F^{n-r_0}$.
Suppose there exists a linear relation among the rows indexed by $S'$:
$$
\sum_{i=1}^{n-r_0}\alpha_i \widetilde c_i+\sum_{\gamma=1}^m \beta_\gamma \widetilde c_{j_\gamma}=0.
$$
Applying $\pi_1$, we obtain
$$
\sum_{i=1}^{n-r_0}\alpha_i \pi_1(\widetilde c_i)+\sum_{\gamma=1}^m \beta_\gamma \pi_1(\widetilde c_{j_\gamma})=0.
$$
Splitting the sum,
\begin{align*}
&\sum_{i=1}^m \alpha_i \pi_1(\widetilde c_i)
+\sum_{i=m+1}^{n-r_0}\alpha_i \pi_1(\widetilde c_i)
+\sum_{\gamma=1}^m \beta_\gamma \pi_1(\widetilde c_{j_\gamma})=0.
\end{align*}
Using $\pi_1(\widetilde c_i)=\pi_1(\widetilde c_{j_i})$ for $1\le i\le m$, this becomes
$$
\sum_{\gamma=1}^m (\alpha_\gamma+\beta_\gamma)\pi_1(\widetilde c_{j_\gamma})
+\sum_{i=m+1}^{n-r_0}\alpha_i \pi_1(\widetilde c_i)=0.
$$
Since $\mathcal B$ is a basis of $\mathbb F^{n-r_0}$, all coefficients must vanish. Hence
$$
\alpha_i=0 \ (m+1\le i\le n-r_0), \qquad \alpha_\gamma+\beta_\gamma=0 \ (1\le \gamma\le m).
$$
Substituting back into the original relation, we get
$
\sum_{\gamma=1}^m \alpha_\gamma(\widetilde c_\gamma-\widetilde c_{j_\gamma})=0
$.
Applying $\pi_2'$, we obtain 
\begin{align*}
   \sum_{\gamma=1}^m \alpha_\gamma(\pi_2'(\widetilde c_\gamma)-\pi_2'(\widetilde c_{j_\gamma}))=0
\implies \sum_{\gamma=1}^m \alpha_{\gamma}\eta_{j_\gamma}e_{j_\gamma}=0 
\end{align*}
Since $\eta_{j_\gamma}\in\mathbb F^\times$ and $e_{j_\gamma}$ are linearly independent, $\alpha_\gamma=0$ for all $\gamma$, and therefore $\beta_\gamma=0$ as well.
Thus all coefficients vanish, and the rows indexed by $S'$ are linearly independent.
If $S=\{1,\dots,n-r_0\}\cup \mathcal J$ with $\mathcal J\neq\mathcal J'$, then $\{\widetilde c_j:j\in \mathcal J\}$ is linearly dependent since it lies in the $m$-dimensional space $W$ but is not a basis. Thus $\det(\widetilde C_S)=0$ for all such $S$.
Therefore exactly one term in the sum is nonzero, and hence $a_{r_0-m}\neq 0$.
Finally, the rows $t_1,\dots,t_{n-r_0}$ are constructed to be linearly independent. The remaining $r_0$ rows can be chosen so that they are independent modulo their span, which is always possible. Hence the rows of $T$ form a basis of $\mathbb F^n$, and $T$ is invertible.
This completes the proof.
\end{proof}
\begin{proposition}
\label{thm: rank argument}
Let $C\in M_n(\mathbb F)$, and let
$
J=\begin{pmatrix}
J' & 0 \\
0 & \oplus_{j=1}^{r_0} J_{0,m_j}
\end{pmatrix}
$
be in Jordan canonical form. Suppose that 
$
\operatorname{rank}({C}^P_{21})=\dim(W)=m.
$
Then there exist $X_1,X_2\in M_n(\mathbb F)$ such that
$$
C = X_1^k + J X_2^k.
$$
\end{proposition}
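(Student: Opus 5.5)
The plan is to use Lemma~\ref{lem: nonzero minor}, which already does most of the work. Fix $C$ with $\operatorname{rank}(C^P_{21})=\dim W=m$. The lemma gives $T\in M_n(\mathbb F)$ such that the coefficient $a_{r_0-m}$ of $z^{r_0-m}$ in $\chi_{\widetilde C}(z)$ is nonzero, where $\widetilde C=P^{-1}(C-JT)P$. Its proof also shows that this $T$ can be taken invertible. Then $T=X_2^k$ for some $X_2$, since invertible matrices over an algebraically closed field admit $k$-th roots. It therefore suffices to show that $C-JT$ (equivalently $\widetilde C$, being a conjugate) is a $k$-th power.

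\textbf{Step 1: bound the eigenvalue-$0$ part of $\widetilde C$.} The last $r_0$ rows of $\widetilde C$ are the rows $c_{\sigma^{-1}(i)}P$, which span the $m$-dimensional space $W$. Hence these $r_0$ rows satisfy $r_0-m$ independent linear relations. This gives $\operatorname{rank}\widetilde C\le n-(r_0-m)$, so $\dim\ker\widetilde C\ge r_0-m$. On the other hand, $a_{r_0-m}\neq 0$ means that the algebraic multiplicity of $0$ as a root of $\chi_{\widetilde C}$ is at most $r_0-m$. Geometric multiplicity never exceeds algebraic multiplicity, so
$$r_0-m\le \dim\ker\widetilde C\le \text{alg.\ mult.}(0)\le r_0-m.$$
Hence both multiplicities equal $r_0-m$, and the generalized $0$-eigenspace coincides with $\ker\widetilde C$.

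\textbf{Step 2: take the $k$-th root.} By Step 1, the Jordan form of $\widetilde C$ is $N\oplus 0_{r_0-m}$, where $N$ is invertible, with all nilpotent blocks of size $1$. As in the proof of Lemma~\ref{lem:T-kth}, $N$ has a $k$-th root and $0^k=0$, so $\widetilde C=Y^k$ for some $Y$. Setting $X_1=PYP^{-1}$ gives $C-JT=X_1^k$, and therefore $C=X_1^k+JX_2^k$.

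\textbf{Main obstacle.} The argument depends on two facts taken from Lemma~\ref{lem: nonzero minor}: that $a_{r_0-m}\neq 0$ for this particular $T$, and that the same $T$ is invertible. I would re-check the lemma's final paragraph, which completes the rows $t_{n-r_0+1},\dots,t_n$ to a basis. Those rows do not affect $\widetilde C$, because the nilpotent block-end rows of $JT$ are zero, so they can be chosen freely to make $T$ invertible. If that bookkeeping fails, the fallback is to argue that $T$ need only have all its nilpotent Jordan blocks of size $1$, rather than be invertible, in the spirit of Lemma~\ref{lem:T-kth}. Step 1 itself is a clean sandwich of multiplicities.
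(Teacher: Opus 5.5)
Your proposal is correct and matches the paper's proof step for step. Both take from Lemma~\ref{lem: nonzero minor} an invertible $T$ with $a_{r_0-m}\neq 0$, bound $\dim\ker(C-JT)\ge r_0-m$ using $\dim W=m$, and conclude that the eigenvalue $0$ of $C-JT$ has only size-one Jordan blocks, so that $C-JT=X_1^k$ and $T=X_2^k$; your explicit sandwich $r_0-m\le\dim\ker\widetilde C\le \mathrm{AM}(0)\le r_0-m$ is simply a cleaner statement of the paper's assertion that $a_{r_0-m}\neq0$ forces $\mathrm{AM}(0)=\operatorname{nullity}(C-JT)$.
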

\begin{proof}
Let $T\in M_n(\mathbb F)$ be an arbitray matrix and $P$ be the permutation matrix associated to $J$.
Since $\mathrm{dim}(W)=m$, we have 
\begin{align*}
    r_0-m &\leq \mathrm{nullity}(C-JT)\leq n-m\\
m &\leq \mathrm{rank}(C-JT)\leq n-(r_0-m)
\end{align*}
The nullity equals the geometric multiplicity of the eigenvalue $0$, i.e., the number of Jordan blocks corresponding to $0$. Furthermore, the algebraic multiplicity of the eigenvalue $0$, denoted by $\mathrm{AM}_{C - JT}(0)$, satisfies:
\begin{align*}
    \mathrm{AM}_{C - JT}(0)& =n-\left(\# \hspace{0.2cm} \text{of non-zero eigenvalues counted with multiplicity}\right)\\ & \geq \operatorname{nullity}(C - JT) \geq r_0-m.
\end{align*}
The characteristic polynomial of $C-JT$ is given by $$\chi_{C-JT}(z)=z^n-a_{n-1}z^{n-1}+\dots+a_{r_0-m}z^{r_0-m},$$
where each $a_i$ is the sum of determinants of principal sub-matrix of size $(n-i).$ If we show that the coefficient $a_{r_0-m}\neq 0$, then $C-JT$ is a $k$th power matrix with $\mathrm{AM}(0)=\operatorname{nullity}(C-JT).$
By Lemma~\ref{lem: nonzero minor}, there exists $T\in M_n(\mathbb F)$ such that $a_{r_0-m}\neq 0$.

Over an algebraically closed field of characteristic $0$, any matrix whose Jordan blocks at $0$ are all of size $1$ admits a $k$-th root. Hence there exists $X_1\in M_n(\mathbb F)$ such that
$$
C-JT=X_1^k.
$$
By the construction in Lemma~\ref{lem: nonzero minor}, the rows of $T$ can be chosen to be linearly independent, and hence $T$ is invertible. Thus there exists $X_2\in M_n(\mathbb F)$ such that $T=X_2^k$.
Therefore, $
C=X_1^k+JX_2^k,
$
as required.
\end{proof}
\noindent
Proposition~\ref{thm: rank argument} shows that every matrix $C \in M_n(\mathbb F)$ satisfying
$
\operatorname{rank}(C_{21}^P) = \dim(W)
$
lies in the image of $\widetilde\omega$.

\section{Low Dimensional Cases}
In this section, we classify the surjectivity in terms of $n$, $k$, and the nullity of $A_2$ for $n=3,4$. By Lemma~\ref{lem:nullity-zero-blocks}, the nullity is equal to the number of Jordan blocks corresponding to the eigenvalue $0$. Accordingly, we denote by $r_0$ both the nullity of $A_2$ and the number of Jordan blocks of $A_2$ corresponding to the eigenvalue $0$.
\subsection{For degree \texorpdfstring{$3$}{3} matrix algebra}
\label{sec: n=3}
In this section, we specialize our study to the case of $3 \times 3$ matrices and analyze the surjectivity of the map $\omega$ in terms of the Jordan canonical form of $A_1^{-1}A_2$.
\begin{proposition}
\label{prop: n=3}
Let $A_1, A_2 \in M_3(\mathbb{F})$ and let $k \geq 2$ be an integer. Suppose that $A_1 \in \mathrm{GL}_3(\mathbb{F})$ and $r_0$ denote the number of Jordan blocks corresponding to the eigenvalue $0$ in the Jordan canonical form of $A_2$. Consider the map
$
\omega(x_1,x_2) = A_1 x_1^k + A_2 x_2^k
$ on $M_3(\mathbb{F})$.
Then $\omega$ is surjective if and only if $k(r_0 - 1) < 3$.
\end{proposition}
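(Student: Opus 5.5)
The statement splits according to the value of $r_0\in\{0,1,2,3\}$, and I would treat the three ranges separately, as follows.

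\textbf{The cases settled by \cref{thm:General criteria for surjectivity}.} First I would reduce, as in \cref{sec: Preliminaries}, to $\widetilde\omega(x_1,x_2)=x_1^k+Bx_2^k$ with $B=A_1^{-1}A_2$ in Jordan form. Since $A_1$ is invertible, $r_0(B)=r_0(A_2)$.
\begin{enumerate}
\item If $r_0\le 1$, then $k(r_0-1)\le 0<3$, and surjectivity is part (1) of \cref{thm:General criteria for surjectivity}.
\item If $r_0=3$, then $k(r_0-1)=2k\ge 4>3$, and non-surjectivity follows from \cref{prop:nonsurjectivity_condition}, since $3\le 2k$. Directly: $A_2=0$, and a matrix $C$ with $A_1^{-1}C=J_{0,3}$ is not a $k$-th power.
\item If $r_0=2$ and $k\ge 3$, then $3\le k=k(r_0-1)$, so \cref{prop:nonsurjectivity_condition} again gives non-surjectivity.
\end{enumerate}
So the only remaining case is $r_0=2$, $k=2$, where we must prove surjectivity.

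\textbf{The case $r_0=2$, $k=2$.} Here $B$ is one of $J_{0,2}\oplus J_{0,1}$ or $J_{\lambda,1}\oplus J_{0,1}\oplus J_{0,1}$ with $\lambda\neq0$. I would use the following square-root criterion over an algebraically closed field of characteristic $0$. A matrix is a square iff its nilpotent Jordan blocks can be paired (with at most one size-one block left over) so that paired sizes differ by at most one. In size $3$ the only non-squares are therefore those whose nilpotent part is exactly $J_{0,2}$ or exactly $J_{0,3}$; the nilpotent part $J_{0,2}\oplus J_{0,1}$ is fine, being $J_{0,3}^2$. Thus it suffices to find a square $T$ (for instance an invertible one) such that $C-BT$ does not have nilpotent part $J_{0,2}$ or $J_{0,3}$.

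For $C$ with $\operatorname{rank}(C^P_{21})=\dim W$, \cref{thm: rank argument} already gives $C$ in the image. So I would restrict to the exceptional $C$, namely those for which the last two rows of $C^P$ (the rows untouched by $BT$) span a space $W$ whose dimension $m$ exceeds $\operatorname{rank}(C^P_{21})$. Since $r_0=2$ we have $m\le 2$, and I would split into the cases $m=1$ and $m=2$.

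In each case, and for each of the two Jordan types of $B$, I would write $T$ with free rows in the positions that $BT$ actually modifies. These are:
\begin{itemize}
\item row $1$ for $J_{0,2}\oplus J_{0,1}$, where $(BT)_1=t_2$, so in fact row $t_2$ is what enters;
\item row $1$ for $J_{\lambda,1}\oplus J_{0,1}\oplus J_{0,1}$, where $(BT)_1=\lambda t_1$.
\end{itemize}
The remaining rows of $T$ I would choose to keep $T$ invertible, hence a square. Then I would aim for $C-BT$ to be either invertible, or of nullity $2$ with zero nilpotent-part-free structure of type $J_{0,1}\oplus J_{0,1}$ or $J_{0,2}\oplus J_{0,1}$, or of nullity one with algebraic multiplicity $1$. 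Concretely, with $m=2$ the two fixed rows are independent and one free row can typically be chosen to make $C-BT$ invertible; the failure occurs exactly when the free row's effect lies in a direction forced by the structure. With $m=1$ the nullity is at least $1$, and I would choose the free row so that the characteristic polynomial has $a_1\neq 0$ (a nilpotent part of $J_{0,1}$), or else force nullity $2$.

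\textbf{Main obstacle.} The hard step is the handful of degenerate $C$ in which every choice of the free row leaves $C-BT$ with nilpotent part $J_{0,2}$. A typical example is $B=J_{0,2}\oplus J_{0,1}$ with $C$ block upper triangular and $C^P_{22}$ nilpotent, exactly the shape of the set $Z$ in \cref{coro: map A}. There I would try to exploit the extra freedom of taking $T$ itself singular but still a square (e.g.\ $T$ of nilpotent type $J_{0,1}$, or $J_{0,2}\oplus J_{0,1}$). This also moves the rows $t_2,t_3$, which $BT$ sees only through the shift, and so can push $C-BT$ to nilpotent type $J_{0,2}\oplus J_{0,1}$ or $J_{0,1}\oplus J_{0,1}$. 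Both of these are squares when $k=2$, which is precisely where $k=2$ succeeds and $k\ge3$ fails. I expect this final explicit $3\times3$ computation, verifying that a suitable $(T,\,C-BT)$ pair exists for each such exceptional $C$, to be the bulk of the work.
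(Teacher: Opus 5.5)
\textbf{There is a genuine gap in the case $r_0=2$, $k=2$.} Your reductions and the cases $r_0\le 1$, $r_0=3$, and $r_0=2$ with $k\ge 3$ are correct and match the paper. For $r_0=2$, $k=2$ you also follow the paper's outline: split by $m=\dim W$ and dispose of $\operatorname{rank}(C^P_{21})=m$ via \cref{thm: rank argument}. But this is the only case needing new work, and you defer it to an unperformed computation. Your diagnosis of the difficulty is also wrong.

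For both Jordan types, $BT$ has zero second and third rows, and its first row is a single row of $T$: $t_2$ for $J_{0,2}\oplus J_{0,1}$, and $\lambda t_1$ for $J_{\lambda,1}\oplus J_{0,1}\oplus J_{0,1}$. The other rows of $T$ never enter $C-BT$. Every vector $v$ is the relevant row of some square $T$: take $T$ invertible if $v\neq 0$, and $T=0$ if $v=0$. So the problem is exactly this: replace the first row of $C$ by an arbitrary vector so that the result is a square. Consequently, your remedy of taking $T$ singular in order to move the other rows of $T$ adds no freedom at all. Likewise, in the case $m=2$ there is no ``failure'': choosing the new first row outside $W$ makes $C-BT$ invertible.

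\textbf{The missing step.} The step you do not supply is the degenerate subcase you flag: $C^P_{21}=0$ and $C^P_{22}=D$ with $D\neq 0$, $D^2=0$. Your claim that every choice of the free row leaves nilpotent part $J_{0,2}$ is false. Choose the free row so that the first row of $C-BT$ vanishes; the paper does this by taking $t_1=c_1$. Then $C-BT=0\oplus D\sim J_{0,2}\oplus J_{0,1}$. By \cref{lem:Miller} this is $J_{0,3}^2$, hence a square when $k=2$. For $k\ge 3$ it is not a $k$-th power, which is exactly where the set $Z$ comes from. This is simply your own ``force nullity $2$'' option, carried out.

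The other exceptional subcase is equally immediate. Suppose $m=1$, $C^P_{21}=0$ and $\operatorname{tr}D\neq 0$. Take any new first row with $(1,1)$-entry $a'\neq 0$. Then $\chi(z)=(z-a')z(z-\operatorname{tr}D)$, so $0$ has algebraic multiplicity one and the matrix is a square. With these explicit choices the case $r_0=2$, $k=2$ closes, matching the paper's Subcases 2.1--2.3. Without them your proposal leaves the surjectivity direction unproved.
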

\begin{proof}
If $A_2 \in \mathrm{GL}_3(\mathbb{F})$, then the surjectivity of $\omega$ follows from Proposition~\ref{thm:invertibleB}. Hence, we assume that $A_2$ is not invertible.
In this case, $\omega$ is surjective if and only if the map
$$
\widetilde{\omega}(x_1,x_2) = x_1^k + (A_1^{-1}A_2)x_2^k
$$
is surjective on $M_3(\mathbb{F})$. Since the image is invariant under conjugation by $\mathrm{PGL}_3(\mathbb{F})$, we may assume that $A_1^{-1}A_2$ is in Jordan canonical form.
Thus, any non-invertible matrix is similar to one of the following:
\begin{align*}
\begin{alignedat}{3}
(i)\;& J_{0,3} \qquad 
& (ii)\;& J_{0,2} \oplus (\lambda),\ \lambda \in \mathbb{F}^\times \qquad 
& (iii)\;& J_{0,2} \oplus J_{0,1} \\
(iv)\;& (\lambda) \oplus J_{0,1} \oplus J_{0,1},\ \lambda \in \mathbb{F}^\times \qquad 
& (v)\;& (\lambda) \oplus (\mu) \oplus J_{0,1},\ \lambda,\mu \in \mathbb{F}^\times
\end{alignedat}
\end{align*}
\textbf{Case 1: $r_0=1$.}  
This corresponds to the forms $(i)$, $(ii)$, and $(v)$. In this case, the surjectivity of $\widetilde{\omega}$ follows from Proposition~\ref{prop:one-nil}.\\
\noindent\textbf{Case 2: $r_0 \geq 2$.}  
Since $k \geq 2$, the condition $k(r_0-1) < 3$ forces $r_0=2$ and $k=2$. The corresponding Jordan forms are $(iii)$ and $(iv)$.
We treat case $(iii)$ and the argument for $(iv)$ is analogous.\\
Let $C \in M_3(\mathbb{F})$. We aim to solve
$$
C = x_1^2 + 
\begin{pmatrix}
J_{0,2} & \\
& 0
\end{pmatrix}
x_2^2.
$$
Let $T = (t_0, t_1, t_2)^\top$. Then
$$
C - 
\begin{pmatrix}
J_{0,2} & \\
& 0
\end{pmatrix}
T
=
\begin{pmatrix} c_{11}-t_{11} & \begin{pmatrix} c_{12}-t_{12} & c_{13}-t_{13} \end{pmatrix}\\ (C_{21})_{2\times 1} & (C_{22})_{2\times 2} \end{pmatrix}=:C'.
$$
Let $W = \mathrm{span}\{c_2, c_3\}$.

\noindent\textbf{Subcase 2.1: $\dim(W)=2$.}  
Choose $t_1 \in \mathbb{F}^3$ such that $c_1 - t_1 \notin W$. Then the matrix above is invertible, hence equal to $X_1^2$ for some $X_1 \in M_3(\mathbb{F})$.
If $t_1 \neq 0$, extend it to a basis of $\mathbb{F}^3$ to obtain $T = X_2^2$. If $t_1 = 0$, take $T=0$.

\noindent\textbf{Subcase 2.2: $\dim(W)=1$.} 
If $\operatorname{rank}(C_{21})=1$, the result follows from Proposition~\ref{thm: rank argument}.
If $\operatorname{rank}(C_{21})=0$, we reduce $C_{22}$ to Jordan form via conjugation. If the resulting block has a nonzero eigenvalue Jordan block or is diagonal, we argue as above.
If $C_{22} \sim J_{0,2}$, then take $t_1=c_1$. By Lemma~\ref{lem:Miller}, there exists $X_1$ such that $X_1^2 = 0 \oplus J_{0,2}$. The construction of $X_2$ follows as before.

\noindent\textbf{Subcase 2.3: $\dim(W)=0$.}  
Choose $t_1 \neq 0$ so that the resulting matrix is diagonal, hence a square. Extend $t_1$ to a basis to obtain $T = X_2^2$.

This proves surjectivity when $k(r_0-1)<3$. The non-surjectivity when $k(r_0-1)\geq 3$ follows from Proposition~\ref{prop:nonsurjectivity_condition}.
\end{proof}
\begin{corollary}
\label{coro: n=3}
    Let $A$ be a non-zero element in $M_3(\mathbb F)$ and $k\geq 2$ be a positive integer. The map $\omega(x_1,x_2)=x_1^k+Ax_2^k$ is surjective on $M_3(\mathbb F)$ if and only if $k(r_0-1)<3$, where $r_0$ is the number of Jordan blocks corresponding to the eigenvalue $0$ of $A$. Moreover, when the map $\omega$ is not surjective, then image misses precisely the set $\left\{\begin{pmatrix}
        \mu & u\\
        0 & M\\
        \end{pmatrix}:\; \mu \in \mathbb F,\; u \in \mathbb F^2,\; M \in M_2(\mathbb F)\setminus\{0\},\; M^2 = 0\right\}$.
\end{corollary}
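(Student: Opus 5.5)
The plan has two parts: first read the surjectivity criterion off Proposition~\ref{prop: n=3}, then determine the missed set exactly in the only non-surjective situation.

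\textbf{Surjectivity criterion.} Take $A_1=I_3$ in Proposition~\ref{prop: n=3}. This gives surjectivity if and only if $k(r_0-1)<3$. Since $A\neq 0$, we have $r_0\le 2$. So the only non-surjective case is $r_0=2$ with $k\ge 3$. By conjugating $A$ to Jordan form, $A$ is one of
\[
J_{0,2}\oplus(0) \qquad\text{or}\qquad (\lambda)\oplus 0_2, \quad \lambda\neq 0 .
\]
Conjugation moves the missed set, so I first need the missed set to be conjugation-invariant. Concretely, I would show it is exactly the set $Z^c$ of matrices $C$ with
\[
\operatorname{rank}(C)=1, \qquad C^2=0 .
\]
The matrices $\begin{pmatrix}\mu&u\\0&M\end{pmatrix}$ listed in the statement should be this set written in the coordinates attached to the normal form $A=(\lambda)\oplus 0_2$. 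I expect the $\mu$ in the statement really has to be forced to $0$ or otherwise constrained; I would check this against the computation and reconcile it with the intended normalization.

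\textbf{Exclusion.} For $A=(\lambda)\oplus 0_2$, write $T=x_2^k$. Then $AT$ has nonzero entries only in its first row, so the last two rows of $C-AT$ equal those of $C$. Take $C$ with first column zero below the diagonal and lower block $M\neq 0$, $M^2=0$. Then $C-AT$ is block upper triangular, its lower-right block $M$ is a nilpotent $J_{0,2}$, and its $(1,1)$ entry is free. The rank of $C-AT$ is at least the rank of the last two rows, and I would use this to show that $C-AT$ always has a zero Jordan block of size at least $2$. The argument is that of Proposition~\ref{prop:nonsurjectivity_condition}: by Lemma~\ref{lem:Miller}, a $k$-th power with $k\ge 3$ in $M_3$ has all zero Jordan blocks of size at most $\lceil 3/k\rceil=1$. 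The one wrinkle is that $\operatorname{rank}(M)=1$ with $M^2=0$ leaves the first row free, so I need to check that the characteristic polynomial keeps a factor $z^2$ while the rank condition forces a nontrivial zero Jordan block. The same analysis is needed for $J_{0,2}\oplus(0)$, transported by conjugation.

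\textbf{Everything else lies in the image.} For every other $C$ I would follow the subcase analysis in the proof of Proposition~\ref{prop: n=3}.
\begin{itemize}
\item If $\dim W=2$, make $C-AT$ invertible using a suitable $T=X_2^k$ chosen via Lemma~\ref{lem:C-LI}.
\item If $\dim W=1$ and $\operatorname{rank}(C_{21}^P)=1$, apply Proposition~\ref{thm: rank argument}, which works for every $k$.
\item If $C_{21}^P=0$ and $C_{22}^P$ is not a nonzero square-zero matrix, then $C_{22}^P$ is diagonalizable or has a nonzero eigenvalue. Choose the free row so that $C-AT$ is upper triangular with zero blocks only of size $1$, hence a $k$-th power, and take $T$ an invertible extension.
\item If $W=0$, choose $C-AT$ diagonal.
\end{itemize}

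\textbf{Main obstacle.} The delicate step is the boundary case where $C_{21}^P=0$ and $C_{22}^P$ is nilpotent of rank $1$, and in particular how the free scalar $\mu$ interacts with it. If the $(1,1)$ entry of $C-AT$ can be made nonzero, the zero part is exactly $J_{0,2}$, which is not a $k$-th power. If it is $0$, the zero part could become $J_{0,3}$ or $J_{0,2}\oplus 0$, and neither is a $k$-th power for $k\ge 3$. I would verify directly, using Lemma~\ref{lem:Miller}, that all these options fail. This shows that precisely the displayed set, for every $\mu$, is missed.
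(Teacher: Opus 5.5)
Your row-by-row computation is correct, and it is the argument the paper's short proof relies on; the paper simply points back to Case~2 of Proposition~\ref{prop: n=3}. In both normal forms $J_{0,2}\oplus(0)=E_{12}$ and $(\lambda)\oplus 0_2=\lambda E_{11}$, the product $AT$ lives only in the first row. That row is arbitrary as $T$ runs over $k$-th powers: complete it to an invertible matrix, or take $T=0$. For $k\ge 3$ the only obstruction is $C^P_{21}=0$ with $C^P_{22}$ nonzero and square-zero, because a $k$-th power in $M_3(\mathbb F)$ then has only $1\times 1$ zero blocks. This gives the displayed set with $\mu$ free, which is what your last paragraph concludes. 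No conjugation is needed, or available, to pass between the two normal forms. They are not conjugate, but the identical computation applies to both.

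The genuine gap is the step you announce first, which would fail. The missed set is not conjugation-invariant, and it is not $\{C:\operatorname{rank}C=1,\ C^2=0\}$. Take $A=E_{11}$. The rank-one square-zero matrix $E_{21}$ lies in the image, since $E_{21}=(E_{11}+E_{21})^k+A\,\mathrm{diag}(\zeta,1,1)^k$ with $\zeta^k=-1$; the first summand is idempotent. On the other hand, $E_{11}+E_{23}$ has rank two and is missed. Your suspicion that $\mu$ must be forced to $0$ is also unfounded, since the $(1,1)$ entry of $C-AT$ is free.

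What is true is the following. When $\operatorname{rank}A=1$, write $A=aw^{\top}$. As $T$ ranges over $k$-th powers, $AT$ ranges over all $av^{\top}$, so the image depends only on $L=\operatorname{Im}A$. The complement is $\{C: CL\subseteq L,\ \bar C\neq 0,\ \bar C^2=0 \text{ on } \mathbb F^3/L\}$. The displayed set is the case $L=\langle e_1\rangle$, and for general $A$ one gets a conjugate of it. For example, with $A=E_{33}$ the displayed matrix $E_{23}$ equals $(E_{23}+E_{33})^k+A\,\mathrm{diag}(1,1,\zeta)^k$, so it lies in the image. The statement therefore has to be read with $A$ in the paper's normal form, as the paper's proof tacitly does. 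State that normalization explicitly instead of trying to prove an invariant description that is false.
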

\begin{proof}
The surjectivity criterion follows immediately from Proposition~\ref{prop: n=3} by taking $A_1=I$ and $A_2=A$.

Suppose that $\omega$ is not surjective. Since $A \neq 0$, we have $r_0 \leq 2$. By Proposition~\ref{prop:main_{r_0}0_1}, the map $\omega$ is surjective when $r_0 \leq 1$, hence $r_0=2$. In this case, the condition $k(r_0-1)<3$ fails if and only if $k \geq 3$.

Thus, the non-surjective case corresponds exactly to $r_0=2$ and $k \geq 3$. For such $A$, the description of the complement of the image follows from the analysis in Case~2 of Proposition~\ref{prop: n=3}, where the only obstruction occurs when the $2\times 2$ block is nilpotent of order $2$. This yields precisely the stated set.
\end{proof}
\subsection{For degree \texorpdfstring{$4$}{4} matrix algebra}
\label{sec: n=4}
In this section, we specialize our study to the case of $4 \times 4$ matrices and analyze the surjectivity of the map $\omega$ in terms of the Jordan canonical form of $A_1^{-1}A_2$.
\begin{proposition}
\label{prop: n=4}
Let $A_1, A_2 \in M_4(\mathbb{F})$ and let $k \geq 2$ be an integer. Suppose that $A_1 \in \mathrm{GL}_4(\mathbb{F})$ and $r_0$ denote the number of Jordan blocks corresponding to the eigenvalue $0$ in the Jordan canonical form of $A_2$. Consider the map
$
\omega(x_1,x_2) = A_1 x_1^k + A_2 x_2^k
$ on $M_4(\mathbb{F})$.
Then $\omega$ is surjective if and only if $k(r_0 - 1) < 4$.
\end{proposition}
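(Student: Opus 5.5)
We follow the pattern of Proposition~\ref{prop: n=3}. Since $A_1$ is invertible, pass to $\widetilde\omega(x_1,x_2)=x_1^k+Jx_2^k$ with $J$ the Jordan form of $A_1^{-1}A_2$. By Lemma~\ref{lem:nullity-zero-blocks}, $r_0(J)=r_0(A_2)$. Non-surjectivity when $k(r_0-1)\ge 4$ is exactly Proposition~\ref{prop:nonsurjectivity_condition} with $n=4$. So only surjectivity when $k(r_0-1)<4$ remains.

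If $r_0\le 1$, surjectivity is Proposition~\ref{prop:main_{r_0}0_1}. Since $k\ge2$, the condition $k(r_0-1)<4$ with $r_0\ge 2$ forces $r_0=2$ and $k\in\{2,3\}$. The corresponding Jordan forms are $J_{0,3}\oplus J_{0,1}$, $J_{0,2}\oplus J_{0,2}$, $J_{0,2}\oplus J_{0,1}\oplus(\lambda)$ and $J_{0,1}\oplus J_{0,1}\oplus D$ with $D$ invertible of size $2$. The value $r_0=3$ occurs only for $k=2$ and is already excluded by Proposition~\ref{prop:nonsurjectivity_condition}.

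For each such $J$, fix $C$ and split according to $m=\dim W$, where $W$ is spanned by the two rows of $C$ sitting at the last indices of the nilpotent blocks. These are the rows of $C-JT$ that $T$ cannot touch.
\begin{enumerate}
\item If $m=2$, choose the remaining rows of $T$ as in Lemma~\ref{lem:C-LI}. This makes $C-JT$ invertible with $T$ invertible, or with $T$ satisfying Lemma~\ref{lem:T-kth}, so both are $k$-th powers.
\item If $m=1$ and $\operatorname{rank}(C^P_{21})=1$, apply Proposition~\ref{thm: rank argument}.
\item If $m=1$ and $C^P_{21}=0$, or if $m=0$, then $P^{-1}(C-JT)P$ is block upper triangular with lower-right block $C^P_{22}\in M_2(\mathbb F)$ fixed. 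The upper-left $2\times2$ block is freely adjustable through $T$. Conjugating by a block-diagonal matrix, we may put $C^P_{22}$ in Jordan form. If $C^P_{22}$ has a nonzero eigenvalue or is diagonalizable, choose $T$ so that the top block is invertible with eigenvalues distinct from those of $C^P_{22}$. Then $C-JT$ has only size-$1$ zero blocks and is a $k$-th power.
\end{enumerate}

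The remaining case, which is the main obstacle, is $C^P_{22}\sim J_{0,2}$. Here $C-JT$ necessarily has a zero Jordan block of size at least $2$. The strategy is to choose $T$ so that $C-JT$ is conjugate to $J_{0,2}\oplus N$ with $N$ invertible, or to a nilpotent matrix with a single zero Jordan block of size $3$ or $4$. A size-$2$ zero block is the square of $J_{0,3}$ or $J_{0,4}$ by Lemma~\ref{lem:Miller}, and this uses the zero-block budget available in dimension $4$. For $k=2$ this works as in Subcase 2.2 of Proposition~\ref{prop: n=3}: make the top block invertible with the off-diagonal block killed, so $C-JT\sim N\oplus J_{0,2}$, and note that $(\text{root of }N)\oplus J_{0,4}$ is too big. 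We therefore need the top block to contribute zero eigenvalues instead. Concretely, aim for $C-JT$ to be a single nilpotent block $J_{0,4}$ or $J_{0,3}\oplus(\nu)$, and take $X_1$ with $X_1^2\sim J_{0,2}\oplus J_{0,2}$ or $J_{0,1}\oplus J_{0,2}$ patterns accordingly.

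For $k=3$, Lemma~\ref{lem:Miller} says a cube has a zero block of size $2$ only as $J_{0,4}^3=J_{0,2}\oplus J_{0,1}\oplus J_{0,1}$ or $J_{0,5}^3$, so within $M_4$ the only option is $J_{0,4}^3$. We therefore aim to choose the free rows of $T$ so that $C-JT$ is nilpotent of Jordan type $(2,1,1)$. That means making the top $2\times 2$ block and the coupling entries zero in a suitable conjugate, which the free rows permit since they can cancel the corresponding rows of $C$. We must simultaneously keep $T$ a $k$-th power, via Lemma~\ref{lem:T-kth} or by making $T$ invertible with a scalar-shift trick as in Case~2 of Proposition~\ref{thm:full size nilpotent}. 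Verifying that these two requirements are compatible in each of the four Jordan types is where we expect the real casework. The first thing to try is $T$ equal to $C$ on the free rows plus a generic invertible correction confined to the kernel directions.
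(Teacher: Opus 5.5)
Your reduction and the routine subcases follow the paper: non-surjectivity from Proposition~\ref{prop:nonsurjectivity_condition}, then $m=2$, then $m=1$ with $\operatorname{rank}(C^P_{21})=1$, then $C^P_{22}$ not nilpotent of order $2$. The case that actually carries the proposition, $C^P_{21}=0$ and $C^P_{22}\sim J_{0,2}$, is left unresolved, and your plan for it when $k=2$ cannot work. By Lemma~\ref{lem:Miller}, a $k$-th power in $M_4(\mathbb F)$ with $k\ge 2$ has all zero Jordan blocks of size at most $2$. Moreover, a zero block of size $2$ can only come from some $J_{0,p}^k$ with $p>k$, which splits into $k\ge 2$ zero blocks. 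So $J_{0,2}\oplus N$ with $N$ invertible, $J_{0,3}\oplus(\nu)$ and $J_{0,4}$ are not squares; you are swapping the roles of $X_1$ and $X_1^k$. The correct target, for $k=2$ and $k=3$ alike, is the one you found for $k=3$: $C-JT$ nilpotent of Jordan type $(2,1,1)$. This type is conjugate both to $(J_{0,3}\oplus J_{0,1})^2$ and to $J_{0,4}^3$. This is the paper's route: it chooses $T$ so that the two rows of $C-JT$ at the indices $i_1,i_2$ other than $\ell_1,\ell_2$ vanish.

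You defer making $T$ a $k$-th power at the same time as ``the real casework''; it is settled by one observation your proposal lacks. Write $W=\mathrm{span}(w)$. Since $C^P_{21}=0$, the vector $w$ is supported on the coordinates $\ell_1,\ell_2$. Set rows $i_1,i_2$ of $C-JT$ to $\alpha_1 w$ and $\alpha_2 w$ with arbitrary $\alpha_1,\alpha_2\in\mathbb F$. Then $C-JT=aw^{\top}$ for a nonzero column vector $a$, and $(aw^{\top})^2=(w^{\top}a)\,aw^{\top}=\operatorname{tr}(C^P_{22})\,aw^{\top}=0$. So $C-JT$ has type $(2,1,1)$ for every choice of the $\alpha_j$. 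The two rows of $T$ that enter $JT$ are obtained from $c_{i_1}-\alpha_1 w$ and $c_{i_2}-\alpha_2 w$ by an invertible change: an index shift inside nilpotent blocks, and $(J')^{-1}$ on the invertible part. If $\dim\mathrm{span}(c_{i_1},c_{i_2},w)\ge 2$, choose the $\alpha_j$ so that these rows are independent. Then use the two rows of $T$ that do not enter $JT$ to make $T$ invertible. Otherwise, choose the $\alpha_j$ so that these rows vanish and take $T=0$. Without this or an equivalent argument, the proposal does not establish surjectivity.

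Two smaller points. In item (3) you never say why $T$ can also be made a $k$-th power. A genericity argument does it: ``top block invertible with spectrum disjoint from that of $C^P_{22}$'' and ``the two determined rows of $T$ independent'' are both nonempty Zariski-open conditions on the free rows of $C-JT$. Also, the block-diagonal conjugation there is unnecessary, and since it need not commute with $J$ it does not preserve the form $C-JT$.
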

\begin{proof}
If $A_2 \in \mathrm{GL}_4(\mathbb{F})$, then the surjectivity of $\omega$ follows from Proposition~\ref{thm:invertibleB}. Hence, we assume that $A_2$ is not invertible.
In this case, $\omega$ is surjective if and only if the map
$$
\widetilde{\omega}(x_1,x_2) = x_1^k + (A_1^{-1}A_2)x_2^k
$$
is surjective on $M_4(\mathbb{F})$. Since the image is invariant under conjugation by $\mathrm{PGL}_4(\mathbb{F})$, we may assume that $A_1^{-1}A_2$ is in Jordan canonical form. Let $r_0$ denote the number of Jordan blocks corresponding to the eigenvalue $0$.
If $r_0=1$, then $A_1^{-1}A_2$ has a unique nilpotent block, and the surjectivity of $\widetilde{\omega}$ follows from Proposition~\ref{prop:one-nil}.
Assume now that $r_0 \geq 2$. We show that $\widetilde{\omega}$ is surjective whenever $k(r_0-1) < 4$. Since $r_0 \geq 2$ and $k \geq 2$, the condition $k(r_0-1) < 4$ implies that $r_0=2$ and $k \in \{2,3\}$. The possible Jordan forms of $A_1^{-1}A_2$ are:
\begin{align*}
\begin{alignedat}{2}
(i)\;& J_{0,3} \oplus J_{0,1} \qquad 
& (ii)\;& (\lambda) \oplus J_{0,2} \oplus J_{0,1},\ \lambda \in \mathbb{F}^\times\\ (iii)\;& J_{0,2} \oplus J_{0,2} & (iv)\;& (\lambda) \oplus (\mu) \oplus J_{0,1} \oplus J_{0,1},\ \lambda,\mu \in \mathbb{F}^\times 
\end{alignedat}
\end{align*}
We treat case $(i)$ and the remaining cases follow similarly.
Let $C \in M_4(\mathbb{F})$. We seek $X_1, X_2 \in M_4(\mathbb{F})$ such that
$$
C = X_1^k + (J_{0,3}\oplus J_{0,1}) X_2^k.
$$
Let $T = (t_0, t_1, t_2,t_3)^\top$. Then
$$
C - 
\begin{pmatrix}
J_{0,3} & \\
& 0
\end{pmatrix}
T
=
\begin{pmatrix} \begin{pmatrix} c_{11}-t_{11} & c_{12}-t_{12}\\c_{21}-t_{21} & c_{22}-t_{22}
\end{pmatrix} & \begin{pmatrix} c_{13}-t_{13} & c_{14}-t_{14}\\c_{23}-t_{23} & c_{24}-t_{24}
\end{pmatrix}\\ (C_{21})_{2\times 2} & (C_{22})_{2\times 2} \end{pmatrix}=:C'.
$$
Let $W = \mathrm{span}\{c_3, c_4\}$.

\noindent\textbf{Subcase 2.1: $\dim(W)=2$.}  
The proof is similar to Proposition~\ref{thm:full size nilpotent} (where $c_n\neq 0$).

\noindent\textbf{Subcase 2.2: $\dim(W)=1$.}  
If $\operatorname{rank}(C_{21})=1$, the result follows from Proposition~\ref{thm: rank argument}.
If $\operatorname{rank}(C_{21})=0$, we reduce $C_{22}$ to Jordan form via conjugation. If the resulting block has a nonzero eigenvalue Jordan block or is diagonal, we argue as above.
If $C_{22} \sim J_{0,2}$, then take $t_1=c_1$ and $t_2=c_2$. By Lemma~\ref{lem:Miller}, there exists $X_1$ such that $X_1^k = 0_2 \oplus J_{0,2}$. Let $W'=\mathrm{span}\{c_1,c_2\}$. If $\mathrm{dim}(W')=2$, then extend the basis $\{c_1,c_2\}$ of $W'$ to basis of $\mathbb F^4$ by choosing $t_0$ and $t_1$. Thus, $T$ is invertible and hence is a $k$-th power of some $X_2$. If $\mathrm{dim}(W')=1$, choose $t_0=t_3=0$. As done before, $T=X_2^k$ for some $X_2\in M_4(\mathbb F)$. 

\noindent\textbf{Subcase 2.3: $\dim(W)=0$.}  
Choose $t_0=\nu e_4$, $t_1=\nu e_1$, $t_2=\nu e_2$, $t_3=\nu e_3$ where $\nu\in \mathbb F^\times$ and is not an eigen value of $C$. Then $T$ is $k$-th power of some $X_2\in M_4(\mathbb F)$ and $C'=X_1^k$ for some $X_1\in M_4(\mathbb F)$.

Thus, $\widetilde{\omega}$ is surjective whenever $k(r_0-1) < 4$. Finally, if $k(r_0-1) \geq 4$, the map is not surjective by Proposition~\ref{prop:nonsurjectivity_condition}.
\end{proof}
\begin{corollary}
\label{coro: n=4}
Let $A$ be a nonzero element of $M_4(\mathbb F)$ and let $k \geq 2$ be an integer. Consider the polynomial map $\omega(x_1,x_2)=x_1^k+Ax_2^k$
on $M_4(\mathbb F)$. Then $\omega$ is surjective if and only if 
$k(r_0-1)<4$, where $r_0$ is the number of Jordan blocks corresponding to the eigenvalue $0$ of $A$.
\end{corollary}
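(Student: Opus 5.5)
This corollary is the special case $A_1=I_4$, $A_2=A$ of Proposition~\ref{prop: n=4}, and the plan is simply to reduce to it. With $A_1=I_4\in\mathrm{GL}_4(\mathbb F)$ we have $A_1^{-1}A_2=A$. By Lemma~\ref{lem:nullity-zero-blocks}, the nullity of $A$ equals the number $r_0$ of Jordan blocks of $A$ at the eigenvalue $0$, so the quantity $r_0$ in the corollary is the one used in Proposition~\ref{prop: n=4}. That proposition then gives directly that $\omega(x_1,x_2)=x_1^k+Ax_2^k$ is surjective on $M_4(\mathbb F)$ if and only if $k(r_0-1)<4$.

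To make the equivalence explicit, and to match Table~\ref{tab:n4}, I would enumerate the possible values of $r_0$. Since $A\neq 0$, its rank is at least $1$, so $r_0\le 3$.
\begin{enumerate}
\item If $r_0=0$, then $A$ is invertible and surjectivity follows from Proposition~\ref{thm:invertibleB}.
\item If $r_0=1$, surjectivity follows from Proposition~\ref{prop:main_{r_0}0_1}, which covers both the full nilpotent block $J_{0,4}$ (Proposition~\ref{thm:full size nilpotent}) and the case of mixed blocks (Proposition~\ref{prop:one-nil}).
\item If $r_0=2$, the condition $k(r_0-1)<4$ reads $k<4$. Surjectivity for $k\in\{2,3\}$ is the positive part of Proposition~\ref{prop: n=4}. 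For $k\ge 4$ we have $4\le k(r_0-1)$, so Proposition~\ref{prop:nonsurjectivity_condition} gives non-surjectivity.
\item If $r_0=3$, then $k(r_0-1)=2k\ge 4$ for every $k\ge2$. So $n=4\le k(r_0-1)$, and Proposition~\ref{prop:nonsurjectivity_condition} again gives non-surjectivity.
\end{enumerate}

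There is essentially no obstacle here; all the work sits in Proposition~\ref{prop: n=4}. The one place needing care is the case $r_0=2$ with $k\in\{2,3\}$. There I would rely on the fact that the four Jordan types listed in that proof are exhaustive for $4\times 4$ matrices with exactly two nilpotent blocks: $J_{0,3}\oplus J_{0,1}$, $(\lambda)\oplus J_{0,2}\oplus J_{0,1}$, $J_{0,2}\oplus J_{0,2}$, and $(\lambda)\oplus(\mu)\oplus J_{0,1}\oplus J_{0,1}$. I would also cite the statement that the remaining cases follow similarly to case $(i)$.
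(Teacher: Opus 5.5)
Your proposal is correct and matches the paper's proof: the corollary is exactly Proposition~\ref{prop: n=4} with $A_1=I_4$ and $A_2=A$. Your enumeration over $r_0\in\{0,1,2,3\}$ just unpacks the two directions of that proposition via Propositions~\ref{thm:invertibleB}, \ref{thm:full size nilpotent}, \ref{prop:one-nil} and \ref{prop:nonsurjectivity_condition}; like the paper, it relies on the ``remaining cases follow similarly'' step in the proof of Proposition~\ref{prop: n=4}.
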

\begin{proof}
The surjectivity criterion follows directly from Proposition~\ref{prop: n=4} by taking $A_1=I$ and $A_2=A$.
\end{proof}

\subsection{Proof of Theorem~\ref{thm: Low dimensional classification}}
\label{proof: thm 1.3}
Since $A_1$ is invertible, we have $r_0(A_1^{-1}A_2)=r_0(A_2)$. So $r_0$ coincides with the nullity of $A_2$. Thus, the cases $n=3$ and $n=4$ follow from Propositions~\ref{prop: n=3} and~\ref{prop: n=4}, respectively.
\subsection{Proof of Corollary~\ref{coro: map A}}
\label{proof: coro 1.4} For $A_1=I$ and $A_2=A$, $r_0(A)$ coincides with the nullity of $A$. Thus, the cases $n=3$ and $n=4$ follow from Corollary~\ref{coro: n=3} and~\ref{coro: n=4}, respectively.
\section{Conclusion}
In this paper, we studied the surjectivity of polynomial maps with non-scalar coefficients on matrix algebras, focusing on maps of the form
$$
\omega(x_1,x_2)=A_1 x_1^k + A_2 x_2^k,
$$
where $A_1$ is invertible. We established general criteria for surjectivity in terms of the number of Jordan blocks corresponding to the eigenvalue $0$ of $A_2$, and showed that these criteria are sharp in low dimensions. In particular, for $n \leq 4$, we obtained a complete characterization of surjectivity in terms of the inequality $n > k(r_0 - 1)$.

These results suggest that the interplay between polynomial maps and the nilpotent structure of the coefficient matrices provides a natural framework for the images of word maps with constants in associative algebras. The low-dimensional classification indicates that the condition obtained in \cref{thm:General criteria for surjectivity} could be optimal. The results of this paper naturally lead to the following problem.
\begin{question}
Under the assumptions of \cref{thm:General criteria for surjectivity}, the map $\omega(x_1,x_2)=A_1 x_1^k + A_2 x_2^k$ is surjective on $M_n(\mathbb F)$ if and only if
$n > k(r_0 - 1)$,
where $r_0$ denotes the number of Jordan blocks of $A_2$ corresponding to the eigenvalue $0$.
\end{question}

\printbibliography
\end{document}